\documentclass[12pt]{amsart}

\setcounter{secnumdepth}{1}
\usepackage[matrix,arrow,curve,frame]{xy}
\usepackage{amsmath,amsthm,amssymb,enumerate}
\usepackage{latexsym}
\usepackage{amscd}
\usepackage[colorlinks=false]{hyperref}
\usepackage{euscript}

\setlength{\oddsidemargin}{0in} \setlength{\evensidemargin}{0in}
\setlength{\marginparwidth}{0in} \setlength{\marginparsep}{0in}
\setlength{\marginparpush}{0in} \setlength{\topmargin}{0in}
\setlength{\headheight}{0pt} \setlength{\headsep}{0pt}
\setlength{\footskip}{.3in} \setlength{\textheight}{9.2in}
\setlength{\textwidth}{6.5in} \setlength{\parskip}{4pt}

\newtheorem{thm}[subsection]{Theorem}

\newtheorem{cor}[subsection]{Corollary}
\newtheorem{lemma}[subsection]{Lemma}

\newtheorem{remark}[subsection]{Remark}

\theoremstyle{definition}

\numberwithin{equation}{section}

\def\cQ{{\bf Q}}

\def\cA{{\cal A}}

\def\ra{\rightarrow}

% Boldface

% Caligraphic
\def\cA{{\mathcal A}}

\def\cQ{{\mathcal Q}}

%\input amssym
% Gothic fonts

\def\gl{{\mathfrak l}}

\def\gs{{\mathfrak s}}

\newfont{\german}{eufm10}

\begin{document}
\pagestyle{plain}

\title{the global sections of the chiral de Rham complex on a Kummer surface}

\author{Bailin Song}

\thanks{The author is supported in part by NSFC No. 11101393 }

\address{Wu Wen-Tsun Key Laboratory of Mathematics, USTC, Chinese Academy of Sciences and Department of Mathematics, University of Science and Technology of China, Hefei, Anhui, 230026, P.R. China}
\email{bailinso@ustc.edu.cn}

\begin{abstract}  The chiral de Rham complex is a sheaf of vertex algebras $\Omega^{\text{ch}}_M$ on any nonsingular algebraic variety or complex manifold $M$, which contains the ordinary de Rham complex as the weight zero subspace. We show that when $M$ is a Kummer surface, the algebra of global sections is isomorphic to the $N=4$ superconformal vertex algebra with central charge $6$. Previously, $\mathbb{C}\mathbb{P}^n$ was the only manifold where a complete description of the global section algebra was known. \end{abstract}

\keywords{vertex algebra; chiral de Rham complex; sigma model; Kummer surface; superconformal algebra; jet scheme; arc space; invariant theory}

\maketitle
%\tableofcontents
\section{Introduction}
In 1998, Malikov, Schechtman, and Vaintrob \cite{MSV} constructed a sheaf of vertex algebras $\Omega^{\text{ch}}_M$ known as the \textsl{chiral de Rham complex} on any nonsingular algebraic variety or complex manifold $M$. It has a conformal weight grading by the non-negative integers, and the weight zero piece coincides with the ordinary de Rham sheaf. This construction has substantial applications to mirror symmetry and is related to stringy invariants of $M$ such as the elliptic genus \cite{Bor}. According to \cite{Kap}, on any Calabi-Yau manifold, the cohomology $H^*(M, \Omega^{\text{ch}}_M)$ can be identified with the infinite-volume limit of the half-twisted sigma model model defined by E. Witten. 

For simplicity of notation, we shall denote the sheaf $\Omega^{\text{ch}}_M$ by $\cQ = \cQ_M$, and we shall denote its global section algebra $H^0(M, \Omega^{\text{ch}}_M)$ by $\cQ(M)$. Certain geometric structures on $M$ give rise to interesting substructures of $\cQ(M)$. For example, if $M$ is a Calabi-Yau manifold, $\cQ(M)$ contains a topological vertex algebra structure, or equivalently, an $N=2$ superconformal structure \cite{MSV}. If $M$ is hyperkahler, $\cQ(M)$ contains an $N=4$ superconformal structure with $c=3d$ where $d$ is the complex dimension of $M$ \cite{BHS,H}. Describing the vertex algebra structure of $\cQ(M)$ by giving generators and operator product expansions is a difficult problem; until now, $\mathbb{C}\mathbb{P}^n$ was the only nontrivial manifold where such a description was known \cite{MS}. In this paper, we shall give a complete description of $\cQ(M)$ in the case where $M$ is a Kummer surface. Since $M$ is hyperkahler it contains an $N=4$ superconformal algebra with $c=6$, and our main result is that $\cQ(M)$ is \textsl{isomorphic} to this $N=4$ algebra.

 Our main tool is a filtration $\{\cQ_n\}$ on $\cQ$ for any $M$ which is a \textsl{good increasing filtration} in the sense of \cite{LiII}, for which the associated graded object $\text{gr}(\cQ)=\bigoplus \cQ_n/\cQ_{n-1}$ is a sheaf of supercommutative graded rings on $M$. There is another, more refined filtration of $\text{gr}(\cQ)$ whose associated graded object $\text{gr}^2(\cQ)$ is also a sheaf of supercommutative graded rings and is closely related to classical tensor bundles on $M$. In the case of Kummer surfaces, the ring of global sections of $\text{gr}^2(\cQ)$ can be described explicitly using techniques developed in our previous work \cite{LSSI, LSSII} on the interaction between jet schemes, arc spaces, and classical invariant theory. We shall prove that for a Kummer surface, the ring of global sections of $\text{gr}^2(\cQ)$ it is exactly the $\gs\gl_2[t]$-invariant subalgebra of the sheaf restricted to one point; see Corollary \ref{thm.inv}. This result can be regarded as the arc space analogue of a theorem of S. Kobayashi \cite{Ko} which says that holomorphic sections of the bundle given by tensors of tangent and cotangent bundles are exactly the set of $SU_2$ invariants of a fiber of the bundle. It can be expected that a similar result will hold for the global sections of $\text{gr}^2(\cQ)$ on other Calabi-Yau manifolds. Finally, using the relationship between global sections of $\cQ$ and $\text{gr}^2(\cQ)$, we find a minimal strong generating set for the vertex algebra of global sections of $\cQ$ on Kummer surfaces; see Theorem \ref{thm.main}.

The plan of this paper is following. In Section \ref{sec.va} we briefly introduce vertex algebras and the chiral de Rham complex. In Section \ref{sec.ag} the associated graded sheaves of the chiral de Rham complex are constructed, 
and the relations of their global sections are studied. In Section \ref{sec.invt}, the ring of $\mathfrak{sl}_2[t]$ invariants of 
the sheaf $\text{gr}^2(\cQ)$ restricted to a point is studied and a linear basis of this ring is constructed. In Section \ref{sec.global}, the global sections of $\text{gr}^2(\cQ)$ and finally, the global sections of $\cQ$ on Kummer surfaces are determined.

\section{Vertex algebras}\label{sec.va}

First we define vertex algebras, which have been discussed from various points of view in the literature (see for example \cite{B,FLM,Kac}). We will follow the formalism developed in \cite{LZ} and partly in \cite{LiI}. Let $V=V_0\oplus V_1$ be a super vector space over $\mathbb{C}$, and let $z,w$ be formal variables. Let $\text{QO}(V)$ be the space of linear maps $$V\ra V((z))=\{\sum_{n\in\mathbb{Z}} v(n) z^{-n-1}|
v(n)\in V,\ v(n)=0\ \text{for} \ n>\!\!>0 \}.$$ Elements $a\in \text{QO}(V)$ can be
represented as power series
$$a=a(z)=\sum_{n\in\mathbb{Z}}a(n)z^{-n-1}\in \text{End}(V)[[z,z^{-1}]].$$ Each $a\in
\text{QO}(V)$ has the form $a=a_0+a_1$ where $a_i:V_j\ra V_{i+j}((z))$ for $i,j\in\mathbb{Z}/2\mathbb{Z}$, and we write $|a_i| = i$. There are nonassociative bilinear operations
$\circ_n$ on $\text{QO}(V)$, indexed by $n\in\mathbb{Z}$, which we call the $n^{\text{th}}$ circle
products. For homogeneous $a,b\in \text{QO}(V)$, they are defined by
$$ a(w)\circ_n b(w)=\text{Res}_z a(z)b(w)\ \iota_{|z|>|w|}(z-w)^n-
(-1)^{|a||b|}\text{Res}_z b(w)a(z)\ \iota_{|w|>|z|}(z-w)^n.$$
Here $\iota_{|z|>|w|}f(z,w)\in\mathbb{C}[[z,z^{-1},w,w^{-1}]]$ denotes the
power series expansion of a rational function $f$ in the region
$|z|>|w|$. We usually omit $\iota_{|z|>|w|}$ and just
write $(z-w)^{-1}$ to mean the expansion in the region $|z|>|w|$,
and write $-(w-z)^{-1}$ to mean the expansion in $|w|>|z|$. For $a,b\in \text{QO}(V)$, the following formal power series identity is known as the \textsl{operator product expansion} (OPE) formula.
 \begin{equation}\label{opeform} a(z)b(w)=\sum_{n\geq 0}a(w)\circ_n
b(w)\ (z-w)^{-n-1}+:a(z)b(w):. \end{equation}
Here $:a(z)b(w):\ =a(z)_-b(w)\ +\ (-1)^{|a||b|} b(w)a(z)_+$, where $a(z)_-=\sum_{n<0}a(n)z^{-n-1}$ and $a(z)_+=\sum_{n\geq
0}a(n)z^{-n-1}$. Often we write
$$a(z)b(w)\sim\sum_{n\geq 0}a(w)\circ_n b(w)\ (z-w)^{-n-1},$$ where
$\sim$ means equal modulo the term $:a(z)b(w):$, which is regular at $z=w$. 

Note that $:a(w)b(w):$ is a well-defined element of $\text{QO}(V)$, called the \textsl{Wick product} of $a$ and $b$, and it
coincides with $a\circ_{-1}b$. The other negative products are given by
$$ n!\ a(z)\circ_{-n-1}b(z)=\ :(\partial^n a(z))b(z):,\qquad \partial = \frac{d}{dz}.$$
For $a_1(z),\dots ,a_k(z)\in \text{QO}(V)$, the iterated Wick product is defined to be
\begin{equation}\label{iteratedwick} :a_1(z)a_2(z)\cdots a_k(z):\ =\ :a_1(z)b(z):,\qquad b(z)=\ :a_2(z)\cdots a_k(z):.\end{equation}
We often omit the formal variable $z$ when no confusion can arise.

$\text{QO}(V)$ is a nonassociative algebra with the operations $\circ_n$. A subspace $\cA\subset \text{QO}(V)$ containing $1$ which is closed under all $\circ_n$ will be called a \textsl{quantum operator algebra} (QOA). A subset $S=\{a_i|\ i\in I\}$ of $\cA$ is said to \textsl{generate} $\cA$ if every $a\in\cA$ can be written as a linear combination of nonassociative words in the letters $a_i$, $\circ_n$, for $i\in I$ and $n\in\mathbb{Z}$. We say that $S$ \textsl{strongly generates} $\cA$ if every $a\in\cA$ can be written as a linear combination of words in the letters $a_i$, $\circ_n$ for $n<0$. Equivalently, $\cA$ is spanned by $$\{ :\partial^{k_1} a_{i_1}\cdots \partial^{k_m} a_{i_m}:| \ i_1,\dots,i_m \in I,\ k_1,\dots,k_m \geq 0\}.$$ We say that $a,b\in \text{QO}(V)$ \textsl{quantum commute} if $(z-w)^N [a(z),b(w)]=0$ for some $N\geq 0$. Here $[,]$ denotes the super bracket. A \textsl{commutative} QOA is a QOA whose elements pairwise quantum commute. This notion is the same as the notion of a \textsl{vertex algebra}, and we will use these notions interchangeably.

%\section{Associated graded sheaves of the chiral de Rham complex}

\subsection{Filtered vertex algebras}\label{sub.1} We recall Li's notion of a \textsl{good increasing filtration} \cite{LiII} of a vertex algebra $V$. This will be a filtration by subspaces
$$\cdots\subset V_{-1}\subset  V_{0}\subset  V_{1}\subset  V_{2}\subset  \cdots, \quad \bigcup V_n=V$$
such that $1\in V_0$ and for $a\in V_n$, $b\in V_m$,
$$a\circ_k b\in V_{n+m},\quad\quad\text{for } k<0,$$
$$a\circ_k b\in V_{n+m-1},\quad\text{for } k\geq 0.$$
A \textsl{filtered vertex algebra} is a vertex algebra $V$ equipped with a good increasing filtration $\{V_n\}$.
Let $$\phi_n: V_n\to V_n/V_{n-1}$$ be the quotient map. Then 
$$\text{gr}(V)=\bigoplus (\text{gr}(V))_n=\bigoplus V_n/V_{n-1}$$
is a \textsl{vertex Poisson algebra}, i.e. a graded, associative, super-commutative algebra $\mathcal A$ equipped with a derivation $\partial$, and a family of derivations $ a\circ_k$ for each $k\geq 0$ and $a \in \mathcal A$. For $\text{gr}(V)$, the associative product is given by 
$$\phi_n(a)\phi_m(b)=\phi_{n+m}(:ab:)\in V_{n+m}\slash V_{n+m-1}, \text{ for } a\in V_n, b\in V_m.$$
The derivation $\partial$  is given by 
$$\partial\phi_n( a)=\phi_n(\partial a), \text{ for } a \in V_n.$$ 
The derivations of elements $\phi_n(a)$ in  $V_n/V_{n-1}$ are given by 
$$ \phi_n(a)\circ_k\phi_m(b)=\phi_{n+m-1}(a\circ_k b)\in V_{n+m-1}\slash V_{n+m-2},  \text{ for } a\in V_n, b\in V_m, k\geq 0.$$
A $\mathbb Z_{\geq 0}$ graded, associative, super-commutative algebra  equipped with a derivation $\partial$ of degree zero is called $\partial$-ring~\cite{LL}. A $\partial$-ring is just an \textsl{abelian} vertex algebra, that is, a vertex algebra in which all the nonnegative circle products  are zero. If $V_n=0$ for $n<0$, then $\text{gr}(V)$ is $\mathbb Z_{\geq 0}$ graded and it is a $\partial$-ring.

\subsection{Chiral de Rham complex}
The chiral de Rham complex~\cite{MSV} is a sheaf of vertex algebras defined on any smooth manifold $M$ in either the algebraic, complex analytic, or $C^{\infty}$ categories. In this paper we work exclusively in the complex analytic setting, although the filtrations we introduce are valid in all the above settings.
Let $\Omega_N$ be the tensor product of $N$ copies of the $\beta\gamma-bc$ system. It has $2N$ even generators $\beta^1(z),\cdots ,\beta^N(z),\gamma^1(z),\cdots,\gamma^N(z)$ and $2N$ odd generators $b^1(z),\cdots ,b^N(z), c^1(z),\cdots,c^N(z)$.
Their nontrivial OPEs are $$\beta^i(z) \gamma^j(w)\sim \frac {\delta^i_j}{z-w},\quad b^i(z) c^j(w)\sim \frac {\delta^i_j}{z-w} .$$
The following four fields in $\Omega_N$
$$L(z)=\sum (:\beta^i(z)\partial\gamma^i(z):-:b^i(z)\partial c^i(z):), \quad
Q(z)= \sum:\beta^i(z)c^i(z):,$$ 
$$J(z)=-\sum:b^i(z)c^i(z):, \quad
G(z)=\sum:b^i(z)\partial\gamma^i(z):,$$
give $\Omega_N$ the structure of a topological vertex algebra \cite{LZ}.

For an open subset $U\subset \mathbb C^N$, let $\mathcal O(U)$ be the space of complex analytic functions on $U$. 
Let $\gamma^1,\dots,\gamma^N$ be coordinates on $\mathbb C^N$. We may regard
$\mathbb C[\gamma^1,\dots,\gamma^N]\subset \mathcal O(U)$ as a subspace of $\Omega_N$
by identifying $\gamma^i$ with  $\gamma^i(z)\in \Omega_N$. As a linear space, $\Omega_N$ has a $\mathbb C[\gamma^1,\dots,\gamma^N]$ module structure. We define $\cQ(U)$ to be the localization of $\Omega_N$ on 
$U$, $$\cQ(U)=\Omega_N\otimes_{\mathbb C[\gamma^1,\dots,\gamma^N]}\mathcal O(U).$$
Then $\cQ(U)$ is the vertex algebra generated by $\beta^i(z), b^i(z), c^i(z)$ and $f(z)$, $f\in \mathcal O(U)$. These satisfy the nontrivial OPEs
$$\beta^i(z)  f(w)\sim \frac {\frac{\partial f}{\partial \gamma ^i}(z)}{z-w},\quad b^i(z) c^j(w)\sim \frac {\delta^i_j}{z-w},$$ as well as the normally ordered relations $$:f(z)g(z):\ =fg(z), \text{ for }  f, g \in \mathcal O(U).$$
 Note that $\cQ(U)$ is spanned by the elements
\begin{equation}\label{eq.span}
:\partial^{k_1}\beta^{i_1}(z)\cdots \partial^{k_s}\beta^{i_s}(z)\partial^{l_1}b^{j_1}(z)
\cdots \partial^{l_t}b^{j_t}\partial^{m_1}c^{r_1}(z)
 \cdots\partial^{n_1}\gamma^{s_1}(z)\cdots f(\gamma)(z): , \quad f(\gamma)\in \mathcal O(U) .
 \end{equation}
Now let $\tilde \gamma^1,\dots, \tilde \gamma^N$ be another set of coordinates on $U$, with
$$\tilde \gamma^i=f^i(\gamma^1,\dots,\gamma^N), \quad \gamma^i=g^i(\tilde \gamma^1,\dots,\tilde \gamma^N).$$
We have the following coordinate transformation equations for the generators.
\begin{align}\label{chi.coo}
\partial \tilde \gamma^i(z)&=:\frac{\partial f^i}{\partial \gamma^j}(z)\partial \gamma^j(z):\,, \nonumber \\
\tilde b^i(z)&=:\frac{\partial g^i}{\partial \tilde \gamma^j}(g(\gamma))b^j: \nonumber\,, \\
\tilde c^i(z)&=:\frac{\partial f^i}{\partial \gamma^j}(z)c^j(z):\,,\\
\tilde \beta^i(z)&=:\frac{\partial g^i}{\partial \tilde \gamma^j}(g(\gamma))(z)\beta^j(z):
+::\frac{\partial}{\partial \gamma}(\frac{\partial g^i}{\partial \tilde \gamma^j}(g(\gamma)))(z)c^k(z):b^j(z):\,.\nonumber
\end{align}
 For a given complex manifold $M$, let 
 $\mathcal U$ be the set of the open sets of $M$ which are isomorphic to open sets of $\mathbb C^n$. The correspondence $U\to \cQ(U)$  for $U\in \mathcal U$, defines a vertex algebra sheaf $\cQ = \cQ_M$ called the \textsl{chiral de Rham complex} on $M$.

When $M$ is Calabi-Yau, i.e. $c_1(TM)=0$, the four fields $L(z),J(z), Q(z), G(z)$
 are globally defined on $M$, giving $\cQ(M)$ the structure of  a topological vertex algebra.

\section{Associated graded sheaves of the chiral de Rham complex}\label{sec.ag}

For $U\in \mathcal U$, we consider the filtration
$$\cQ_0(U)\subset \cQ_1(U)\subset\cdot\cdot\cdot\cdot\subset\cQ_n(U)\subset\cdot\cdot\cdot, \quad \cQ(U)=\bigcup \cQ_n(U).$$
Here $\cQ_n(U)$ is spanned by the elements with only at most $n$ copies of $\beta$ and $b$ , i.e. the elements in equation~\eqref{eq.span} with $s+t\leq n$.

For each $n\geq 0$, let  $$\phi_n:\cQ_n(U )\to \cQ_n(U)/\cQ_{n-1}(U).$$ be the projection.
\par
 For $a\in \cQ_k(U)$, $b\in \cQ_l(U)$, we have $1\in \cQ_{0}(U)$ and
\begin{align*}
&a\circ_n b\in \cQ_{k+l}(U), \quad &n<0;\\
&a\circ_n b\in \cQ_{k+l-1}(U), \quad &n\geq 0.
\end{align*}
Then $\{\cQ_n(U)\}$ is a good increasing filtration of $\cQ(U)$ and the associated graded object
$$\text{gr}(\cQ(U))=\bigoplus \cQ_n(U)/\cQ_{n-1}(U)$$
is $\partial$-ring as in Section~\ref{sub.1}. 
As a $\partial$-ring, $\text{gr}(Q(U))$ is generated by 
$$\beta^i=\phi_1(\beta^i(z)), \quad b^i=\phi_1(b^i(z)),\quad c^i= \phi_0(c^i(z)),$$ 
and $f(\gamma)=\phi_0(f(\gamma)(z))$, $f(\gamma) \in \mathcal O(U)$.

 The coordinate transformation equations ~(\ref{chi.coo}) for $\cQ(U)$ clearly preserve the filtration. 
 So for $U\in \mathcal U$ , $U\to \cQ_n(U)$ gives a sheaf $\cQ_n$ on $M$ and $\{\cQ_n\}$ is a filtration of the sheaf $\cQ$. 
 The correspondence $U\to \text{gr}(\cQ(U))$ with $U\in \mathcal U$, gives a sheaf of $\partial$-rings on the manifold $M$, 
 which we denote by $\text{gr}(\cQ)$. Obviously $$\text{gr}(\cQ)=\bigoplus_n \cQ_n/\cQ_{n-1}.$$

For the sheaf $\text{gr}(\cQ)$, the coordinate transformation equations of $\beta, \gamma, b, c $ are
\begin{align}\label{chi.co2}
\partial^n \tilde \gamma^i&=\partial^{n-1}(\frac{\partial f^i}{\partial \gamma^j}\partial \gamma^j),\nonumber\\
\partial^n \tilde b^i&=\partial^n(\frac{\partial g^i}{\partial \tilde \gamma^j}(g(\gamma))b^j),\nonumber\\
\partial^n \tilde c^i&=\partial^n(\frac{\partial f^i}{\partial \gamma^j}c^j),\\
\partial^n \tilde \beta^i&=\partial^n(\frac{\partial g^i}{\partial \tilde \gamma^j}(g(\gamma))\beta^j)+
\partial^n(\frac{\partial}{\partial \gamma}(\frac{\partial g^i}{\partial \tilde \gamma^j}(g(\gamma)))c^kb^j).\nonumber
\end{align}
 The only difference between these equations and those of the chiral de Rham sheaf is that the Wick product 
 is replaced by the ordinary product in an associated supercommutative algebra.  The globally defined operator $\partial=L\circ_0$ on $\cQ$ gives rise to a globally defined operator on $\text{gr}(\cQ)$, also denoted by $\partial$, satisfying
 $$ \phi_n(\partial a)=\partial \phi_n(a), \quad a\in \cQ_n(M).$$
 In particular, the space of global sections of $\text{gr}(\cQ)$ is a $\partial$-ring.

\subsection{A refined graded sheaf} Notice that  in the coordinate transformation equations \eqref{chi.co2} of $\text{gr}(\cQ(U))$, 
the equation for $\partial^n \beta$ is nonclassical because of the term
$\partial^n(\frac{\partial}{\partial \gamma}(\frac{\partial g^i}{\partial \tilde \gamma^j}(g(\gamma)))c^kb^j)$. 
To omit this term from the coordinate transformation equations, we construct a filtration of the sheaf $\text{gr}(\cQ)$.
First, $\cQ_n(U)/\cQ_{n-1}(U)$ is spanned by
\begin{equation} \label{bas:a}
a=\partial^{k_1}\beta^{i_1}\cdot\cdot\cdot\partial^{k_s}\beta^{i_s}
\partial^{k_{s+1}}b^{i_{s+1}}\cdot\cdot\cdot \partial^{k_{n}}b^{i_{n}}\partial^{m_1}c^{r_1}
 \cdot\cdot\cdot\partial^{n_1}\gamma^{s_1}\cdot\cdot\cdot \partial^{n_t}\gamma^{s_t}f(\gamma),\quad f(\gamma)\in \mathcal O(U).
 \end{equation}
Define $\text{deg}_{\beta}(a)=s$. Let $\text{gr}(\cQ(U))_{n,s}\subset \cQ_n(U)/\cQ_{n-1}(U)$, which is spanned
by all elements $a\in \cQ_n(U)/\cQ_{n-1}(U)$ of the form \eqref{bas:a} with $\text{deg}_{\beta }(a) \leq s$. We obtain the following filtration of $\cQ_n(U)/\cQ_{n-1}(U)$:
$$\text{gr}(\cQ(U))_{n,0}\subset \text{gr}(\cQ(U))_{n,1}\subset\cdots\subset  \text{gr}(\cQ(U))_{n,n}=\cQ_n(U)/\cQ_{n-1}(U).$$
Let
$$\psi_{n,s}: \text{gr}(\cQ(U))_{n,s}\to \text{gr}(\cQ(U))_{n,s}/ \text{gr}(\cQ(U))_{n,s-1}$$
be the projection.

Regarding $\text{gr}(\cQ(U))$ as an abelian vertex algebra, $1\in \cQ_{0,0}(U)$ and 
for $a\in \cQ_{n,k}(U)$, $b\in \cQ_{m,l}(U)$, we have
\begin{align*}
&a\circ_n b=\frac{1}{(n-1)!}(\partial^{n-1}a)b&\in \cQ_{n+m,k+l}(U), \quad &n<0;\\
&a\circ_n b=0&\in \cQ_{m+n, k+l-1}(U), \quad &n\geq 0.
\end{align*}
As in Section~\ref{sub.1}, $\{\cQ_{n,s}(U)\}$ is a good increasing filtration of $\text{gr}(\cQ(U))$ and 
the associated graded object
$$\text{gr}^2(\cQ(U))=\bigoplus_{n,s} \text{gr}(\cQ(U))_{n,s}$$
is a $\partial$-ring. Moreover, $\text{gr}^2(Q(U))$ is generated as a $\partial$-ring by $\psi_{1,1}(\beta^i),\psi_{1,0}(b^i), \psi_{0,0}(c^i)$, 
and $\psi_{0,0}(f(\gamma))$, $f(\gamma) \in \mathcal O(U)$. In fact as an abstract  $\partial$-ring, $\text{gr}(\cQ(U))$ is isomorphic to $\text{gr}^2(\cQ(U))$.
When no confusion can arise, the symbols $\beta^i, \gamma^i, b^i, c^i$ in $\text{gr}(\cQ(U))$ will also be used to denote the corresponding elements $\psi_{1,1}(\beta^i)$, $\psi_{0,0}(\gamma^i)$, $\psi_{1,0}(b^i), \psi_{0,0}(c^i)$ in $\text{gr}^2(\cQ(U))$.

 The coordinate transformation equations ~(\ref{chi.co2}) for $\text{gr}(\cQ(U))$ preserve this filtration. 
 So for $U\in \mathcal U$, $U\to \text{gr}(\cQ(U))_{n,s}$ gives a sheaf  $\text{gr}(\cQ)_{n,s}$ on $M$ and $\{\text{gr}(\cQ)_{n,s}\}$ is a filtration of the sheaf 
 $\text{gr}(Q)$ and  the correspondence $U\to \text{gr}^2(\cQ(U))$ with $U\in \mathcal U$, gives a bigraded $\partial$-ring sheaf on the manifold $M$, 
 which is denoted by $\text{gr}^2(\cQ)$. Obviously $$\text{gr}^2(\cQ)=\bigoplus_{n,s} \text{gr}(\cQ)_{n,s}/\text{gr}(\cQ)_{n,s-1}.$$

For the sheaf $\text{gr}^2(\cQ)$, the  relations of $\beta, \gamma, b, c $ under the coordinate transformation are
\begin{align}\label{chi.co3}
\partial^n \tilde \gamma^i&=\partial^{n-1}(\frac{\partial f^i}{\partial \gamma^j}\partial \gamma^j),\nonumber\\
\partial^n \tilde b^i&=\partial^n(\frac{\partial g^i}{\partial \tilde \gamma^j}(g(\gamma))b^j),\nonumber\\
\partial^n \tilde c^i&=\partial^n(\frac{\partial f^i}{\partial \gamma^j}c^j),\\
\partial^n \tilde \beta^i&=\partial^n(\frac{\partial g^i}{\partial \tilde \gamma^j}(g(\gamma))\beta^j).\nonumber
\end{align}
As in the sheaf $\text{gr}(\cQ)$, the derivation $\partial$ is a global operator in $\text{gr}^2(\cQ)$, and satisfies
$$\psi_{n,s}(\partial a)=\partial\psi_{n,s} (a), \quad a\in \text{gr}(\cQ)_{n,s}(U).$$
In particular, the space of the global sections of $\text{gr}^2(\cQ)$ is a $\partial$-ring.

\subsection{Relation to classical bundles}
Consider the following filtration for $\text{gr}(\cQ(U))_{n,s}/\text{gr}(\cQ(U))_{n,s-1}$:
$$\cdots\subset \text{gr}^2(\cQ(U))_{n,s}^{t+1}\subset \text{gr}^2(\cQ(U))_{n,s}^{t}\subset \text{gr}^2(\cQ(U))_{n,s}^{t-1}\cdots,$$
$$\text{gr}(\cQ(U))_{n,s}/\text{gr}(\cQ(U))_{n,s-1}=\bigcup_{m\in \mathbb Z} \text{gr}^2(\cQ(U))_{n,1}^t.$$
$\text{gr}^2(\cQ(U))_{n,s}^{t}$ is spanned by
$$\partial^{k_1}\beta^{i_1}\cdots\partial^{k_s}\beta^{i_s}\partial^{k_{s+1}}b^{i_{s+1}}\cdots \partial^{k_{n}}b^{i_{n}}\partial^{l_1}\gamma^{j_1}\cdots\partial^{l_u}\gamma^{j_u}\partial^{l_{u+1}}c^{j_{u+1}}
\cdot\cdot\cdot \partial^{l_{v}}c^{j_{v}}f, \quad v\geq t.$$
Here $l_s\geq 1$, for $1\leq s\leq u$, $ f\in \mathcal O(U)$.
We also have an associated graded object
$$\text{gr}^3(\cQ(U))=\bigoplus \text{gr}^2(\cQ(U))_{n,s}^{t}/\text{gr}^2(\cQ(U))_{n,s}^{t+1}.$$
It has a $\partial$-ring structure in an obvious way.
 The coordinate transformation equations ~(\ref{chi.co3}) for $\text{gr}^2(\cQ(U))$     
 preserve this filtration $\{\text{gr}^2(\cQ(U))_{n,s}^{t}\}$. So for $U\in \mathcal U$, the correspondence $U\to \text{gr}^2(\cQ(U))_{n,s}^{t}$ gives a sheaf
 $\text{gr}^2(\cQ)_{n,s}^t$ on $M$ and $\{\text{gr}^2(\cQ)_{n,s}^t\}$ is a filtration of the sheaf $\text{gr}(Q)$. Finally, the
 correspondence $U\to \text{gr}^3(\cQ(U))$ with $U\in \mathcal U$, gives a sheaf of $\partial$-rings on
 the manifold $M$, which is denoted $\text{gr}^3(\cQ)$.  Obviously
 $$\text{gr}^3(\cQ)=\bigoplus_{n,s,t} \text{gr}^2(\cQ)_{n,s}^t/\text{gr}^2(\cQ)_{n,s}^{t+1}.$$

 The coordinate transformation formula for the sheaf $\text{gr}^3(\cQ)$ is
                                               \begin{align}
                                 \partial^n \tilde \gamma^i&=\frac{\partial f^i}{\partial \gamma^j}\partial^n \gamma^j ,\nonumber \\
\partial^n \tilde b^i&=\frac{\partial g^i}{\partial \tilde \gamma^j}(g(\gamma))\partial^nb^j ,\nonumber \\
\partial^n \tilde c^i&=\frac{\partial f^i}{\partial \gamma^j}\partial^n c^j ,\\
\partial^n \tilde \beta^i&=\frac{\partial g^i}{\partial \tilde \gamma^j}(g(\gamma))\partial^n\beta^j . \nonumber
\end{align}
So $\text{gr}^3(\cQ)$ is the sheaf corresponding to the vector bundle
\begin{equation} \label{bun.chi3}
\text{Sym}(\bigoplus_{n=1}^{\infty}(TM\bigoplus T^*M))\bigotimes {\bigwedge}^*(\bigoplus_{n=1}^{\infty}(TM\bigoplus T^*M).
\end{equation}

\subsection{Global sections}
In this section, we discuss the relationship between the global sections of $\cQ$, $\text{gr}(\cQ)$ and  $\text{gr}^2(\cQ)$. The exact sequence of sheaves
$$0\to \cQ_{n-1} \to\cQ_{n}\to \cQ_n/\cQ_{n-1}\to 0,$$
induces the exact sequence
\begin{equation}\label{ex:q}
0\to \cQ_{n-1}(M)\to\cQ_{n}(M)\stackrel{\phi_n}{\to} \cQ_{n}/\cQ_{n-1}(M).
\end{equation}
We have the following \textsl{reconstruction property}.

\begin{lemma}\label{lem.q}
If $a_i\in \cQ_{n_i}(M)$, for $1\leq i\leq l$, such that $\phi_{n_i}(a_i)$ generate $\text{gr}(\cQ)(M)$ as a $\partial$-ring, then
$\phi_n$ is surjective, and it therefore induces an isomorphism
$$\cQ_{n}(M)/\cQ_{n-1}(M){\cong} \cQ_{n}/\cQ_{n-1}(M).$$
Furthermore, $\{a_i|\ 1\leq i\leq l\}$ strongly generates the vertex algebra $\cQ(M)$. \end{lemma}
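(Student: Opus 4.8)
The plan is to prove the lemma in two stages: first the surjectivity of $\phi_n$ in \eqref{ex:q}, and then the strong generation of $\cQ(M)$ as a vertex algebra. These are really the same statement packaged at two levels, so the core work is in the first part.

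\textbf{Step 1: Surjectivity of $\phi_n$.} Fix an element $w \in \cQ_n/\cQ_{n-1}(M)$; I want to lift it to $\cQ_n(M)$. By hypothesis, $\text{gr}(\cQ)(M)$ is generated as a $\partial$-ring by the elements $\bar a_i := \phi_{n_i}(a_i)$. Since $\text{gr}(\cQ)(M)$ is a $\partial$-ring — an abelian vertex algebra — this means $w$ can be written as a linear combination of normally ordered monomials (which, in the abelian setting, are just products) of the form $:\partial^{k_1}\bar a_{i_1} \cdots \partial^{k_m}\bar a_{i_m}:$. Now I replace each $\bar a_{i_j}$ by its chosen lift $a_{i_j} \in \cQ_{n_{i_j}}(M)$, each $\partial$ by the globally defined operator $\partial$ on $\cQ$ (using $\phi_n(\partial a) = \partial\phi_n(a)$), and each abelian product by the Wick product $:\ :$ in $\cQ(M)$. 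This produces a genuine global section $\hat w \in \cQ(M)$, which lies in $\cQ_n(M)$ because the filtration multiplicativity ($a\circ_{-1} b \in \cQ_{k+l}$) forces the total $\beta b$-degree of each monomial to match the degree of the corresponding monomial downstairs, which is at most $n$ since $w$ was homogeneous of filtration degree $n$. Finally, by the definition of the vertex Poisson structure on $\text{gr}(\cQ)$ recalled in Section~\ref{sub.1} — namely $\phi_{k+l}(:ab:) = \phi_k(a)\phi_l(b)$ and compatibility with $\partial$ — we get $\phi_n(\hat w) = w$. Hence $\phi_n$ is surjective, and combined with the exactness of \eqref{ex:q} this gives the isomorphism $\cQ_n(M)/\cQ_{n-1}(M) \cong \cQ_n/\cQ_{n-1}(M)$.

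\textbf{Step 2: Strong generation.} Given surjectivity of every $\phi_n$, I argue by induction on $n$ that every element of $\cQ_n(M)$ lies in the span of iterated Wick products of $\partial$-derivatives of the $a_i$. The base case $\cQ_0(M) = \cQ_{-1}(M) = 0$ is trivial (or $\cQ_0$ is handled directly). For the inductive step, take $b \in \cQ_n(M)$; then $\phi_n(b) \in \cQ_n/\cQ_{n-1}(M)$, and by the same monomial expansion as in Step 1 there is an element $\hat b \in \cQ_n(M)$ which is an explicit linear combination of iterated Wick products of $\partial^k a_i$'s with $\phi_n(\hat b) = \phi_n(b)$. Then $b - \hat b \in \ker\phi_n = \cQ_{n-1}(M)$, which by the inductive hypothesis is strongly generated by the $a_i$; hence so is $b$. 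Since $\cQ(M) = \bigcup_n \cQ_n(M)$, the set $\{a_i\}$ strongly generates the vertex algebra $\cQ(M)$.

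\textbf{Main obstacle.} The only subtle point is keeping track of the filtration degree when passing from abelian products in $\text{gr}(\cQ)(M)$ to Wick products in $\cQ(M)$: one must check that a monomial expressing a filtration-degree-$n$ element does not accidentally require lifts whose Wick product sits in a higher $\cQ_m$. This is controlled by the good-increasing-filtration axioms — $a\circ_{-k} b \in \cQ_{\deg a + \deg b}$ for $k \geq 1$ — together with the fact that $\phi$ is a \emph{graded} map, so the degree-$n$ part of $w$ only involves lifts of total degree exactly matching. A secondary bookkeeping issue is the ordering and reassociation of iterated Wick products (which, unlike the abelian product, is non-associative and non-commutative on the nose): here one uses that the \emph{associated graded} product is commutative and associative, so any reassociation of the lifted product differs from $\hat w$ by a term in $\cQ_{n-1}(M)$, which is absorbed into the induction. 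I expect no serious difficulty beyond this careful degree- and order-tracking.
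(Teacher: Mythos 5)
Your proposal is correct and follows essentially the same route as the paper: lift each monomial in the $\partial$-ring generators $\phi_{n_i}(a_i)$ to an iterated Wick product of the $a_i$ using the compatibility $\phi_{k+l}(:\partial^s a\,\partial^t b:)=\partial^s\phi_k(a)\,\partial^t\phi_l(b)$, conclude surjectivity of $\phi_n$, and then induct on the filtration degree via the exact sequence \eqref{ex:q} to get strong generation. Your extra remarks on degree bookkeeping and reassociation of Wick products only make explicit what the paper leaves implicit.
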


\begin{proof} For the first part of the lemma,
we show that for any $b\in \cQ_n/\cQ_{n-1}(M)$, there is a global section 
$a$ of $\cQ_n$ generated by the $a_i$ under the Wick product and $\partial$, such that $\phi_n(a)=b$.
 Since $\phi_{n_i}(a_i)$ generate $\text{gr}(\cQ)(M)$  and $\phi_n$ is linear, we can assume 
 $$b=\partial^{j_1} \phi_{n_{i_1}}(a_{i_1})\partial^{j_2}\phi_{n_{i_2}}(a_{i_2})\cdots \partial^{j_k}\phi_{n_{i_k}}(a_{i_k}), \quad n=\sum n_i.$$

For  any $a\in \cQ_k(M)$, $b\in\cQ_l(M)$,  we have
$:\partial^sa\partial^t b:\in\cQ_{k+l}(M)$ and
$$\phi_{k+l}(:\partial^sa\partial^t b:)=\partial^s \phi_k(a)\partial^t\phi_l(b).$$
 Let $a=:\partial^{j_1}a_{i_1}\partial^{j_2}a_{i_2}\cdots \partial^{j_k}a_{i_k}:\in \cQ_n(M)$, then
$$\phi_n(a)=\partial^{j_1}\phi_{n_1}(a_{i_1})\phi_{n-n_1}(:\partial^{j_2}a_{i_2}\cdots \partial^{j_k}a_{i_k}:)$$
$$=\cdots
=\partial^{j_1}\phi_{n_{i_1}}(a_{i_1})\partial^{j_2}\phi_{n_{i_2}}(a_{i_2})\cdots \partial^{j_k}\phi_{n_{i_k}}(a_{i_k})=b.$$

For  the second part of lemma, we proceed by induction: for $a\in \cQ_0(M)$, since $\cQ_0$ is a subsheaf of  $\text{gr}(\cQ)$, $\phi_0$ is 
the identification, so $a$ is generated by $a_i$. Assume that for $n\geq 0$,
any global section in $ \cQ_{n}(M)$ is strongly generated by $a_i$.  Now for $a\in\cQ_{n+1}(M)$, 
by the first part of the proof,  there is a global section $\tilde a$ which is generated by $a_i$, such that
$\phi_{n+1}(\tilde a)=\phi_{n+1}(a)$. So $\phi_{n+1}(a-\tilde a)=0 $ and by (\ref{ex:q}) $a-\tilde a \in \cQ_{n}(M)$. 
By assumption, $a-\tilde a$ is strongly generated by $a_i$, so $a$ is strongly generated by $a_i$.  This completes the proof.
\end{proof}

Now we study the relationship between $\text{gr}(\cQ)(M)$ and $\text{gr}^2(\cQ)(M)$.
For $a\in \text{gr}(\cQ)_{n,s}(M)$, $b\in \text{gr}(\cQ)_{m,t}(M)$, we have $$\psi_{n+m,s+t}(\partial^la\partial^kb)=\partial^l\psi_{n,s}(a)\partial^k\psi_{m,t}(b),$$ 
and we have the exact sequence of sheaves
$$0\to \text{gr}(\cQ)_{n,s-1}\to \text{gr}(\cQ)_{n,s}\to \text{gr}(\cQ)_{n,s}/\text{gr}(\cQ)_{n,s_1}\to 0.$$
Using a similar argument to the proof of Lemma \ref{lem.q}, we obtain
\begin{lemma} \label{lem.q1}
If $a_i\in \text{gr}(\cQ)_{n_i,s_i}$ for $1\leq i\leq l$, such that $\psi_{n_i,s_i}(a_i)$ generate $\text{gr}(\cQ)(M)$ as a $\partial$-ring, then
$\psi_{n,s}$ is surjective, and it induces an isomorphism
$$\text{gr}(\cQ)_{n,s}(M)/\text{gr}(\cQ)_{n,s-1}(M){\cong} \text{gr}(\cQ)_{n,s}/\text{gr}(\cQ)_{n,s-1}(M).$$
Furthermore, $\{a_i|\ 1\leq i\leq l\}$ generate $\text{gr}(\cQ)(M)$ as a $\partial$-ring.
\end{lemma}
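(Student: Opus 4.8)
The plan is to imitate the proof of Lemma \ref{lem.q}, replacing the single filtration $\{\cQ_n\}$ by the bigraded filtration $\{\text{gr}(\cQ)_{n,s}\}$, and $\cQ$ itself by the abelian vertex algebra $\text{gr}(\cQ)$. Recall from the previous subsection that as an abelian vertex algebra $\text{gr}(\cQ(U))$ has only the negative circle products $a\circ_{-k-1}b = \frac{1}{k!}(\partial^k a)b$ available, so ``strongly generates'' and ``generates as a $\partial$-ring'' coincide here; this is why the conclusion in this Lemma is phrased in terms of $\partial$-ring generation rather than strong generation. The key structural input is the compatibility formula $\psi_{n+m,s+t}(\partial^l a\,\partial^k b) = \partial^l\psi_{n,s}(a)\,\partial^k\psi_{m,t}(b)$ stated just above, which is the $\text{gr}^2$-analogue of the formula $\phi_{k+l}(:\partial^s a\,\partial^t b:) = \partial^s\phi_k(a)\,\partial^t\phi_l(b)$ used in Lemma \ref{lem.q}.

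First I would prove surjectivity of $\psi_{n,s}$ on global sections. Given $b \in \text{gr}(\cQ)_{n,s}/\text{gr}(\cQ)_{n,s-1}(M)$, use the hypothesis that the $\psi_{n_i,s_i}(a_i)$ generate $\text{gr}(\cQ)(M)$ as a $\partial$-ring, together with linearity of $\psi_{n,s}$, to reduce to the case $b = \partial^{j_1}\psi_{n_{i_1},s_{i_1}}(a_{i_1})\cdots\partial^{j_k}\psi_{n_{i_k},s_{i_k}}(a_{i_k})$ with $n = \sum n_{i_r}$ and $s = \sum s_{i_r}$. Then form the lift $a = \partial^{j_1}a_{i_1}\cdots\partial^{j_k}a_{i_k} \in \text{gr}(\cQ)_{n,s}(M)$, using the product in the $\partial$-ring $\text{gr}(\cQ)(M)$ (this is a global section because each $a_i$ is and the product of global sections is global), and apply the compatibility formula repeatedly — peeling off one factor at a time exactly as in Lemma \ref{lem.q} — to conclude $\psi_{n,s}(a) = b$. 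Combined with the exactness of $0\to\text{gr}(\cQ)_{n,s-1}(M)\to\text{gr}(\cQ)_{n,s}(M)\to\text{gr}(\cQ)_{n,s}/\text{gr}(\cQ)_{n,s-1}(M)$ (the left-exact part of the long exact cohomology sequence), surjectivity gives the claimed isomorphism $\text{gr}(\cQ)_{n,s}(M)/\text{gr}(\cQ)_{n,s-1}(M) \cong \text{gr}(\cQ)_{n,s}/\text{gr}(\cQ)_{n,s-1}(M)$.

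For the final assertion, that $\{a_i\}$ generates $\text{gr}(\cQ)(M)$ as a $\partial$-ring, I would induct on the pair $(n,s)$ with respect to some total order refining the partial order (say, lexicographically on $n$ then $s$, noting $0 \leq s \leq n$). The base case $n=s=0$ holds because $\text{gr}(\cQ)_{0,0}$ is identified with a subsheaf of functions already covered by the $a_i$ of bidegree $(0,0)$. For the inductive step, given a global section $a \in \text{gr}(\cQ)_{n,s}(M)$, use the surjectivity just established to find $\tilde a$, a $\partial$-polynomial in the $a_i$, with $\psi_{n,s}(\tilde a) = \psi_{n,s}(a)$; then $a - \tilde a \in \text{gr}(\cQ)_{n,s-1}(M)$ by exactness, and the inductive hypothesis finishes it. The only mild subtlety — and the step I would be most careful about — is keeping the bigrading bookkeeping straight: one must check that forming products and applying $\partial$ really does land in the bidegree $\big(\sum n_{i_r},\ \sum s_{i_r}\big)$ predicted by the compatibility formula, and that the induction is set up on a genuine well-founded order so that the descent from $(n,s)$ to $(n,s-1)$ (with a separate treatment when $s=0$, dropping to the top bidegree of level $n-1$, which is covered because $\text{gr}(\cQ)_{n,0}$ involves no $\beta$'s and meets $\text{gr}(\cQ)_{n-1,n-1}$ trivially — or, more cleanly, by just noting $\text{gr}(\cQ)_{n,-1} = 0$) terminates. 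There is no new analytic or cohomological content beyond what was already used in Lemma \ref{lem.q}; the argument is formally identical once the filtration is relabelled.
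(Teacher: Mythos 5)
Your proposal is correct and follows exactly the route the paper intends: the paper gives no separate proof of Lemma \ref{lem.q1}, stating only that it follows ``using a similar argument to the proof of Lemma \ref{lem.q},'' and your argument is precisely that adaptation, with the compatibility formula $\psi_{n+m,s+t}(\partial^l a\,\partial^k b)=\partial^l\psi_{n,s}(a)\,\partial^k\psi_{m,t}(b)$ and the left-exact sequence playing the same roles as their $\phi$-analogues. The extra care you take with the bigraded induction (in particular noting $\text{gr}(\cQ)_{n,-1}=0$) is sound and fills in details the paper leaves implicit.
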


 \section{Invariants of $\mathfrak{sl_2}[t]$ action}\label{sec.invt}

We work in the standard basis $H,G,F$ for the Lie algebra $\gs\gl_2 =\mathfrak {sl}(2,\mathbb C)$, satisfying
 $$[H,G]=2G,\quad [H,F]=-2F,\quad [G,F]=H.$$
 Let $V=\mathbb Ce^1\oplus \mathbb Ce^2$ be the two-dimensional representation of $\gs\gl_2$  given by
 \begin{align*}
 H e^1&=e^1,\quad He^2=-e^2,\\
 Ge^1&=0, \quad Ge^2=e^1, \\
 Fe^1&=e^2,\quad Fe^2=0 .
 \end{align*}
 Let $V^*= \mathbb C{e^*}^1\oplus \mathbb C{e^*}^2$ be the dual representation. There is an induced representation of $\gs\gl_2[t]$ on the ring
 $$R=\mathbb C[\beta_i^j,b_i^j,c_i^j,\gamma_i^j], \quad i\geq 1,j=1,2.$$ For $\xi\in \gs\gl_2$, $x=\beta^j,\gamma^j, b^j,c^j$, $i=1,2$, we have
 $$\xi t^n(x_m)=(\xi x)_{m-n}, (n\leq m); \quad  \xi t^n(x_m)=0, (n>m).$$ Here $\xi$ acts on $\beta^j,b^j$ as it acts on $e^j$, and $\xi$ acts on $c^j, \gamma^j$ as it acts on ${e^*}^j$. Note that $R$ is a $\partial$-ring with a derivation $D$ given by $$D x_m=m x_{m+1}.$$ Let $R_1=\mathbb C[\beta_1^j,b_1^j,c_1^j,\gamma_1^j]\subset R$. By Corollary 7.2 of \cite{LSSII}, we have the following result.
  \begin{thm} \label{thm.invt}
  $R^{\mathfrak{sl}_2[t]}$ as a $\partial$-ring with derivation $D$ is generated by $R_1^{\mathfrak{sl}_2}$.
    \end{thm}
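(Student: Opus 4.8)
The plan is to deduce Theorem \ref{thm.invt} from the cited Corollary 7.2 of \cite{LSSII} by identifying the $\gs\gl_2[t]$-module $R$ with the kind of arc-space coordinate ring treated there. First I would set up the dictionary: the ring $R=\mathbb C[\beta_i^j,b_i^j,c_i^j,\gamma_i^j]$ is the coordinate ring of the (infinite) jet scheme of the $\gs\gl_2$-module $W=V\oplus V\oplus V^*\oplus V^*$ (two copies on which $\gs\gl_2$ acts via $V$, carrying $\beta$ and $b$; two copies via $V^*$, carrying $c$ and $\gamma$), with the $bc$ pair being odd and the $\beta\gamma$ pair even. The derivation $D$ defined by $Dx_m = m x_{m+1}$ is precisely the canonical derivation on the arc space ring, and the $\gs\gl_2[t]$-action is the standard prolongation of the linear $\gs\gl_2$-action on $W$, so that $\xi t^n$ lowers the jet index by $n$. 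Under this identification $R_1$ is the coordinate ring of $W$ itself (the zero-jet part), and $R_1^{\gs\gl_2}$ is the classical ring of polynomial invariants of $\gs\gl_2$ on $W$. Then Theorem \ref{thm.invt} is exactly the statement that the arc-space invariant ring $R^{\gs\gl_2[t]}$ is generated as a differential (i.e. $\partial$-) ring by the classical invariants $R_1^{\gs\gl_2}$, which is the content of Corollary 7.2 of \cite{LSSII}.

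The main steps are therefore: (i) recall the precise hypotheses of Corollary 7.2 of \cite{LSSII} — it applies to the arc space of a representation of $\gs\gl_2$ built from copies of $V$ and $V^*$, possibly with a super (parity) grading, and asserts that the invariants of the prolonged action are differentially generated by the degree-zero (classical) invariants; (ii) check that our $W$ with its even part $V\oplus V^*$ (the $\beta,\gamma$) and odd part $V\oplus V^*$ (the $b,c$) is of exactly this form, so the hypotheses are met verbatim; (iii) match the derivation $D$ and the $\gs\gl_2[t]$-action with those in \cite{LSSII} up to the harmless normalization $Dx_m=mx_{m+1}$ versus $Dx_m=x_{m+1}$ (these generate the same $\partial$-subrings, since over $\mathbb C$ one can rescale, or simply note both give the same differential closure); (iv) conclude. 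Since the hard analytic and invariant-theoretic work is already packaged in \cite{LSSII}, the proof here is essentially a verification that we are in the scope of that corollary.

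The step I expect to require the most care is (ii)–(iii): making sure the super/parity bookkeeping is compatible, namely that ``$\partial$-ring generated by $R_1^{\gs\gl_2}$'' means generated by the \emph{even and odd} classical invariants together (invariants involving the odd $b^j,c^j$ are allowed, and appear), and that the $\gs\gl_2$-equivariance of $D$ is exactly what lets one transport the generation statement. One must also confirm that the $\gs\gl_2$-action on $c^j,\gamma^j$ via $V^*$ in \cite{LSSII} matches the one fixed here (the explicit formulas for $H,G,F$ on $e^1,e^2$ and the dual action on ${e^*}^1,{e^*}^2$), so that the classical invariant ring $R_1^{\gs\gl_2}$ is literally the same object in both papers. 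Once these identifications are in place, the theorem follows immediately by citing Corollary 7.2 of \cite{LSSII}.
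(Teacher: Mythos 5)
Your proposal is correct and takes essentially the same route as the paper: the paper offers no argument beyond the citation, stating the result directly as a consequence of Corollary 7.2 of \cite{LSSII} after setting up the $\gs\gl_2[t]$-action on $R$ and the derivation $D$. Your additional bookkeeping (identifying $R$ with the arc-space coordinate ring of $V\oplus V\oplus V^*\oplus V^*$ with the indicated parities, and matching $D$ and the prolonged action) is exactly the implicit verification the paper leaves to the reader.
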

By Weyl's first and second fundamental theorems of invariant theory for the standard representation of $\gs\gl_2$ \cite{We},  $R_1^{\mathfrak{sl}_2}$ is generated by
  \begin{align}\label{gen.R}
  &\beta_1^1b_1^2-\beta_1^2b_1^1,& &\beta_1^1c_1^1+\beta_1^2c_1^2,&&\beta_1^1\gamma_1^1+\beta_1^2\gamma_1^2,&
  &b_1^1c_1^1+b_1^2c_1^2,\nonumber \\
  &b_1^1\gamma_1^1+b_1^2\gamma_1^2,& &c_1^1\gamma_1^2-c_1^2\gamma_1^1,&
  &b_1^1b_1^2,& & c_1^1c_1^2.
    \end{align}
 So  as a $\partial$-ring with the derivation $D$, $R^{\mathfrak{sl}_2[t]}$ is generated by (\ref{gen.R}). In particular, as a supercommutative ring, $R^{\mathfrak{sl}_2[t]}$ is generated by the $l$-th derivatives of the generators \eqref{gen.R}, for $l\geq 0$.

 Now let
 \begin{align}
 \partial^ix^j&=i! x_{i+1}^j,\quad i\geq 0,\,x=\beta, b, c;  \nonumber    \\
 \partial^{i}\gamma^j&={(i-1)!}\gamma^j_i, \quad \quad i\geq 1  ;   
 \end{align}
 Then $$R=\mathbb C[\partial^l\beta^i,\partial^{l+1}\gamma^i,\partial^lb^i,\partial^lc^i],\quad l\geq 0,\, i=1,2.$$
 For $\xi\in\mathfrak g$, $x=\beta^i, \partial\gamma^i, b^i, c^i$, We have
 \begin{align}\xi t^n(\partial^mx)&=\frac {m!}{(m-n)!}\partial^{m-n}(\xi x), & m\geq n,  \nonumber \\
  \xi t^n(\partial^mx)&=0, &m<n.
 \end{align}

In this set of generators, we have $D\partial^nx=\partial^{n+1}x$, for $x=\beta^i, \partial\gamma^i, b^i, c^i$. 
So we can replace the derivation $D$ by the symbol $\partial$.
The generators for  $R^{\mathfrak{sl}_2[t]}$  in $(\ref{gen.R})$ are

  \begin{align}\label{gen.R2}
  &A_{\beta b}=\beta^1b^2-\beta^2b^1,& &A_{\beta c}=\beta^1c^1+\beta^2c^2,&&A_{\beta\partial \gamma}=\beta^1\partial\gamma^1+\beta^2\partial\gamma^2,&\nonumber\\
  &A_{bc}=b^1c^1+b^2c^2, &&A_{\partial\gamma b}=b^1\partial \gamma^1+b^2\partial\gamma^2,& &A_{\partial \gamma c}=c^1\partial\gamma^2-c^2\partial\gamma^1,&\\
  &  A_{bb}=b^1b^2,& & A_{cc}=c^1c^2.\nonumber 
    \end{align}

\subsection{A standard monomial theory for $R^{\mathfrak{sl}_2[t]}$}\label{sec.mon} We will use the standard monomial theory to give a basis of $R^{\mathfrak{sl}_2[t]}$.  

As a super commutative ring, $S=\{\partial^kA_{xy}|\ A_{xy}  \text{ is  one of  ~(\ref{gen.R2}})\}$ is a set of generators of $R^{\mathfrak{sl}_2[t]}$. We give an ordering to the symbols $\beta, \partial\gamma, b, c$
$$\beta>\partial\gamma>b>c.$$
The partial ordering of $S$ is given by
$\partial^kA_{xy}<\partial^{k'}A_{x'y'}$  if and only if one of the following three cases is satisfied:
\begin{enumerate}
\item $k\leq k'-2 $ ;
\item  $k=k'-1$ and  \begin{enumerate}\item if $x'$ is odd, $x'>y$,
   								\item if $x'$ is even, $x'\geq y$;
                                       \end{enumerate}

\item   $k=k'$ and \begin{enumerate}
					\item if $x', y'$ are even,  $x'> x$, $ y'\geq y$ or  $x'\geq  x$, $y'> y$ ,
					\item if $x'$ is even and $y'$ is odd, $x'\geq x$ and  $y'>y$,
					\item if $x'$ is odd and $y'$ is even, $x'> x$ and  $y'\geq y$,
					\item if $x'$ is odd and $y'$ is even, $x'> x$ and  $y'>y$.
				\end{enumerate}
\end{enumerate}
An ordered product $s_{1}s_{2}\cdots s_{k}$	of elements of $S$ is said to be \textsl{standard} if  $s_{i}\leq s_{i+1}$, for $1\leq i< k$ and the equality can be taken only when $s_{i}=\partial^kA_{\beta\partial \gamma}$, $k\geq 0$. 

 \begin{thm}\label{thm.standard}
 $R^{\mathfrak{sl}_2[t]}$ has a standard monomial theory for $S$, i.e. the standard monomials forms a basis of $R^{\mathfrak{sl}_2[t]}$.
 \end{thm}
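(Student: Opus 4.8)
The plan is to establish the two halves of a standard monomial theory separately: first that the standard monomials span $R^{\mathfrak{sl}_2[t]}$, and then that they are linearly independent. For the spanning half, I would introduce a term order (a monomial order) on $R$ refining the given partial ordering on $S$, so that each product $s_1 s_2 \cdots s_k$ of elements of $S$ has a well-defined leading monomial in $R = \mathbb{C}[\partial^l\beta^i, \partial^{l+1}\gamma^i, \partial^l b^i, \partial^l c^i]$. The key combinatorial step is a \emph{straightening law}: for any pair $s, s' \in S$ that violates the standardness condition (i.e. $s > s'$, or $s = s' \neq \partial^k A_{\beta\partial\gamma}$), the product $s\, s'$ can be rewritten as a $\mathbb{C}$-linear combination of standard monomials whose leading terms are strictly smaller in the term order. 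This is precisely the classical pattern of the Weyl-algebra / Pl\"ucker straightening relations, and here the relevant syzygies come from Weyl's second fundamental theorem applied to $\mathfrak{sl}_2$ (the quadratic Pl\"ucker-type relations among the generators \eqref{gen.R2}), together with their $\partial$-derivatives. Given the straightening law, a Noetherian induction on the term order shows every element of $R^{\mathfrak{sl}_2[t]}$ is a linear combination of standard monomials: take an arbitrary invariant, look at its leading monomial, match it (up to scalar) with the leading monomial of a unique standard monomial by a dimension/combinatorial count on the shape of the monomial, subtract, and iterate.

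For linear independence, the cleanest route is a leading-term argument: I would show that \emph{distinct} standard monomials have \emph{distinct} leading monomials in $R$ under the chosen term order. This reduces to a purely combinatorial bijection between standard monomials in $S$ and a distinguished set of monomials in the $\partial^l$-variables (the ``standard tableaux'' side). Once injectivity of the leading-term map on standard monomials is established, linear independence is immediate, since any nontrivial linear relation would force its minimal leading term to cancel, which is impossible if all leading terms are distinct. Combining the two halves gives that the standard monomials form a basis.

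An alternative, and perhaps more robust, route avoids choosing an explicit term order: use Theorem \ref{thm.invt} and the explicit generators \eqref{gen.R2} to get that $R^{\mathfrak{sl}_2[t]}$ is a quotient of the free supercommutative $\partial$-ring on the symbols $A_{xy}$ by the $\partial$-ideal generated by the Weyl second-fundamental-theorem relations; then show this presentation admits a Gr\"obner basis consisting of the straightening relations, with the standard monomials being exactly the standard monomials of that Gr\"obner basis. This is really the same argument repackaged. In either formulation the main obstacle is the \emph{verification of the straightening law in the $\partial$-ring setting}: one must check that applying $\partial$ to a Weyl relation and reducing produces only standard monomials of lower leading order, and that the intricate case division (1)--(3) in the definition of the partial order on $S$ — with its parity-sensitive clauses coming from the odd generators $b, c$ and the fermionic relations $A_{bb} = b^1b^2$, $A_{cc} = c^1c^2$, $b^i b^i = 0$ — is exactly the one that makes the straightening terminate. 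Concretely, I expect the hard cases to be those where $k = k'$ and the two generators share an index-$1$ variable in the top slot (e.g. products like $A_{\beta b} A_{\beta c}$, or $A_{bc} A_{\partial\gamma c}$), since there the Pl\"ucker relation and the super-sign bookkeeping interact, and one must confirm the replacement respects both the degree grading from $\text{gr}^2(\cQ)$ and the ordering. Once the straightening law is nailed down with all signs correct, the rest of the proof is the standard formal machine.
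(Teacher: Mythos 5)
Your proposal follows essentially the same route as the paper: for spanning, a straightening law obtained by applying $\partial^{2k}$ and $\partial^{2k+1}$ to the quadratic Weyl relations among the generators \eqref{gen.R2} and inducting on a total (lexicographic) order refining the partial order on $S$; for independence, showing that distinct standard monomials have distinct leading monomials in $R$ via a multiplicative leading-term map. Be aware that the paper's proof consists almost entirely of the verification you defer --- the explicit derived relations, the identification of the largest (non-standard) monomial in each, and the computation $L(\partial^{2k}A_{xy})$, $L(\partial^{2k+1}A_{xy})$ with the check $L(ss')=L(s)L(s')$ --- and that ``violates standardness'' must also cover incomparable pairs (which the paper handles by first sorting with the total order $\prec$), not just $s>s'$ or $s=s'$.
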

 \begin{proof} We first prove that any ordered product of elements of $S$ is a linear combination of standard monomials. Since $S$ generates $R^{\mathfrak{sl}_2[t]}$, any element of $R^{\mathfrak{sl}_2[t]}$ is a linear combination of standard monomials.
 It is convenient, for what follows, to  order the elements of $S$ by
  $$\partial^kA_{xy} \prec\partial^{k'}A_{x'y'},\text{ if } k<k' \text{ or }k=k' \text{ and } x<x'  \text{ or } k=k' , x=x' \text{ and } y< y' ,$$
  and then order the words (ordered product of elements) of $S$  lexicographically.  From the definitions of the ordering and the partial orderings of  $S$, we can see that $\partial^kA_{xy}<\partial^{k'}A_{x'y'}$ imply $\partial^kA_{xy} \prec\partial^{k'}A_{x'y'}$.
 
 We have the following quadratic relations for elements in (\ref{gen.R2}):
 \begin{align}
 A_{xy}A_{zz}=0,& &\text{ if } z=b \text{ or } c \text{ and } z=x \text{ or } y;&\label{eq.s1}\\
 A_{xy}A_{zz}+\alpha A_{xz}A_{yz}=0,& &\text{ if } z=b\text{ or } c \text{ and } z\neq x,y, \text{ here } A_{cb} \text{ means } A_{bc},&\label{eq.s2}\\
 && \alpha\text{ is a nonzero constant};\nonumber\\
 A_{\beta\partial \gamma}A_{bc}-A_{\beta b}A_{\partial \gamma c}+A_{\beta c}A_{\partial \gamma b}=0.\label{eq.s3}
\end{align}
Applying $\partial^{2k}$ and $\partial^{2k+1}$, $k\geq 0$ on these relations,  we get quadratic relations for elements  in $S$. For example, for $z=b$ or $c$ and $z=x$ or $y$, applying $\partial^{2k}$ on ~(\ref{eq.s1}), we get
$$
\frac{(2k)!}{k!k!}\partial^k A_{xy}\partial^kA_{zz}=-\sum_{i=1}^k \frac{(2k)!}{(k-i)!(k+i)!}(\partial^{k-i} A_{xy}\partial^{k+i}A_{zz}+ \partial^{k-i} A_{zz}\partial^{k+i}A_{xy}).$$

Multiplying by the constant $\frac{k!k!}{(2k)!}$ on both sides, observe that the monomials on the right hand side of the above equation are standard and are smaller than the monomial on left hand side.  We use '$s.s.$' to represent them and we get
\begin{align}
\underline{\partial^k A_{xy}\partial^kA_{zz}}=s.s.. \label{eq.s11}
\end{align}
Similarly, applying $\partial^{2k+1}$ on ~(\ref{eq.s1}), we get
\begin{align}\label{eq.s12}
&\underline{\partial^k A_{xc}\partial^{k+1}A_{cc}}+\partial^kA_{cc}\partial^{k+1}A_{xc}
=s.s. &\text{       for } x=\beta,\partial\gamma, b;\nonumber\\
&2\underline{\partial^k A_{cc}\partial^{k+1}A_{cc}}
=s.s.; &\nonumber\\
&\underline{\partial^k A_{xb}\partial^{k+1}A_{bb}}+\partial^kA_{bb}\partial^{k+1}A_{xb}
=s.s. &\text{       for } x=\beta,\partial\gamma;\\
&\partial^k A_{bc}\partial^{k+1}A_{bb}+\underline{\partial^kA_{bb}\partial^{k+1}A_{bc}}
=s.s. ;&\nonumber\\
&2\underline{\partial^k A_{bb}\partial^{k+1}A_{bb}}=s.s..\nonumber
\end{align}
Applying $\partial^{2k}$ on ~(\ref{eq.s2}), we get
\begin{align}\label{eq.s21}
&\partial^kA_{zz}\partial^k A_{\beta\partial\gamma}+\alpha\underline{\partial^k A_{\partial\gamma z}\partial^kA_{\beta z}}=s.s.; & \text{ for } z=b,c;\nonumber\\
&\underline{\partial^kA_{bb}\partial^k A_{x c}}+\alpha\partial^k A_{bc}\partial^kA_{xb}=s.s.; & \text{ for } x=\beta, \partial \gamma;\\
&\partial^kA_{cc}\partial^k A_{xb}+\alpha\underline{\partial^k A_{bc}\partial^kA_{xc}}=s.s.; & \text{ for } x=b,\beta,\partial\gamma;\nonumber
\end{align}
Applying $\partial^{2k+1}$ on ~(\ref{eq.s2}), we get
\begin{align}\label{eq.s22}
&\partial^kA_{zz}\partial^{k+1} A_{\beta\partial\gamma}
+\underline{\partial^{k} A_{\beta\partial\gamma}\partial^{k+1}A_{zz}}
+\alpha\partial^k A_{\partial\gamma z}\partial^{k+1}A_{\beta z}
-\alpha\partial^k A_{\beta z}\partial^{k+1}A_{\partial\gamma z}=s.s.; & \text{ for } z=b,c;\nonumber\\
&\partial^kA_{bb}\partial^{k+1} A_{xc}
+\partial^{k} A_{xc}\partial^{k+1}A_{bb}
+\alpha\partial^k A_{bc}\partial^{k+1}A_{xb}
+\alpha\underline{\partial^k A_{xb}\partial^{k+1}A_{bc}}=s.s.; & \text{ for } x=\beta, \partial \gamma;\\
&\partial^kA_{cc}\partial^{k+1} A_{xb}
+\underline{\partial^{k} A_{xb}\partial^{k+1}A_{cc}}
+\alpha\partial^k A_{bc}\partial^{k+1}A_{xc}
+\alpha\partial^k A_{xc}\partial^{k+1}A_{bc}=s.s.;&\text{ for } x=b,\beta,\partial\gamma;\nonumber
\end{align}
And from ~\ref{eq.s3} we get
\begin{align}
\partial^kA_{bc}\partial^k A_{\beta\partial \gamma}-\partial^kA_{\partial \gamma c}\partial^kA_{\beta b}+\underline{\partial^kA_{\partial \gamma b}\partial^kA_{\beta c}}=s.s.;\label{eq.s31}
\end{align}
\begin{align}\label{eq.s32}
\underline{\partial^k A_{\beta\partial \gamma}\partial^{k+1}A_{bc}}+\partial^kA_{\beta b}\partial^{k+1}A_{\partial \gamma c}-\partial^kA_{\beta c}\partial^{k+1}A_{\partial \gamma b}\\
+\partial^kA_{bc}\partial^{k+1} A_{\beta\partial \gamma}-\partial^kA_{\partial \gamma c}\partial^{k+1}A_{\beta b}+\partial^kA_{\partial \gamma b}\partial^{k+1}A_{\beta c}=s.s..\nonumber
\end{align}
 The underlined monomials  are the largest  in each of  the above equations \eqref{eq.s11}-\eqref{eq.s32}, and the other monomials are standard.  One can check that all of the ordered words like $\partial^kA_{xy}\partial^{k'}A_{x'y'}$ with $\partial^kA_{xy}\preceq\partial^{k'}A_{xy}$  and $k'\leq k+2$, if they are not equal to zero, are standard except the underlined monomials in \eqref{eq.s11}-\eqref{eq.s32}.  Since when $k'\geq k+2$, $\partial^kA_{xy}\partial^{k'}A_{x'y'}$ is standard. Thus

\textsl{Any ordered word of the form $\partial^kA_{xy}\partial^{k'}A_{x'y'}$ with $\partial^kA_{xy}\preceq\partial^{k'}A_{xy}$ that is not standard, can be written as a linear combination of  standard monomials, which are  lexicographically smaller.}

 Now  for any ordered  word $s=s_{1}s_{2}\cdots s_{n}$ with $s_i=\partial^{k_i}A_{x_ky_k}$,  if $s_i\succ s_{i+1}$, switching $s_i$ and $s_{i+1}$, we get a lexicographically smaller word. If $s_i\preceq s_{i+1}$ and $s_is_{i+1}$ is not standard, by the above result, $s_is_{i+1}$ can be written as a linear combination of   lexicographically smaller words. So $s$ is equal to a linear combination of lexicographically smaller words of $S$. Thus 
 
 \textsl{Any ordered word of $S$ that is not standard, can be written as a linear combination of lexicographically smaller words of $S$.}
 
 Using the above statement,  for any ordered word of $S$, if it is not standard, we can write it as a a linear combination of standard words of $S$, which are lexicographically smaller.

Now we prove that the set of the standard monomials is linearly independent.

We order  the generators of $R$ by
$$\partial^kc_1<\partial^kc_2<\partial^kb_2<\partial^kb_1<\partial^{k+1}\gamma_1<\partial^{k+1}\gamma_2<\partial^k\beta_2<\partial^k\beta_1<\partial^{k+1}c_1<\partial^{k+1}c_2.$$
Then we order the monomials of $R$ lexicographically, i.e. for two monomials of $R$, which are written in the form $r=r_1\cdots r_k$ and $r'=r_1'\cdots r_{k'}'$ with $r_i\leq r_{i+1}$ and $r_i'\leq r_{i+1}'$,
$$r<r' \text{ if and only if (1) }   k<k' \text{ or  (2) }  k=k' ,  r_i=r_i' \text{ for } i<j \text{ and }  r_j<r_j'.$$
For any element $s\in R$, let $L(s)$ be the largest monomial in $s$. 

For example, let  $ a=1$ for $x= \beta, b$, $ a=2$ for $ x =\partial \gamma, c$; $\bar a=1$ for $y=\partial \gamma, c$, $\bar a=2$ for $y=\beta, b$.
$$
L(\partial ^{2k}A_{xy})=C_1\partial^ky_a\partial^kx_{\bar a}, \quad L(\partial ^{2k+1}A_{xy})=C_2\partial^kx_a\partial^{k+1}y_{\bar a}.
$$
Here $C_1, C_2$ are nonzero constants. Thus for $s , s'\in S$,  $L(s)$ and  $L(s')$ can be written in the form $C x_ay_{\bar a}$ and $C'x'_ay'_{\bar a}$, respectively. If $ss'$ is standard, by the definition of the partial ordering, we have
$$x_a\leq x'_a,  \text{ for } x_a \text{ even, } ( x_a< x'_a,\text{ for } x_a \text{ odd});$$
$$y_{\bar a}\leq y'_{\bar a},  \text{ for } y_{\bar a} \text{ even, } ( y_{\bar a}< y'_{\bar a},\text{ for } y_{\bar a} \text{ odd}).$$
And $L(ss')=CC'x_ay_{\bar a}x'_ay'_{\bar a}.$
In fact, $L$ satisfies the property:
$$L(ss')=L(s)L(s'),\quad\text{ for } s,s'\in R \text{ and }L(s)L(s')\neq 0 .$$ 
 Thus if $s=s_1\cdots s_n$ is a standard monomial and $L(s_i)=x^i_ay^i_{\bar a}$, then
 $$L(s)=L(s_1)\cdots L(s_n)=C x^1_ay^1_{\bar a}\cdots x^n_ay^n_{\bar a}, \quad C\neq 0$$
 and if $s$ and $s'$ are two standard monomials, then $s\neq s'$ if and only if $L(s)\neq  L(s')$ . So 
 $\{L(s)| s \text{ is a standard monomial of } S\}$ are linearly independent. $L$ is a linear map, so the set of standard monomials of $S$ is linearly independent. We have proved  that the standard monomials of $S$ form a basis of $R^{\mathfrak{sl}_2[t]}$.
  
  \end{proof} 
\subsection{A criterion for $\mathfrak{sl}_2[t]$ invariance}
We need the following lemma later to determine whether elements of $R$ are $\mathfrak{sl}_2[t]$ invariant.
\begin{lemma}\label{lem:in}
If $f\in R$  is $\mathfrak{sl}_2$ invariant and satisfies
\begin{equation}\label{eq:in}
\sum_{k\geq 1}(Ht^kf) \frac{\partial^k
\gamma^1}{k!}+
\sum_{k\geq 1}(Gt^k f)\frac{\partial^k \gamma^2}{k!}=0 ,
\end{equation}
then $f$ is $\mathfrak{sl}_2[t]$ invariant.
\end{lemma}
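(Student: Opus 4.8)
The plan is to establish $\xi t^nf=0$ for every $\xi\in\mathfrak{sl}_2$ and every $n\ge 1$; since $\xi t^0f=0$ is assumed, this is exactly $\mathfrak{sl}_2[t]$-invariance of $f$. \emph{Step 1: upgrading \eqref{eq:in} to a complete set of identities.} Introduce the linear map $C\colon\mathfrak{sl}_2\otimes V^*\to R$ defined by $C\bigl(\xi\otimes{e^*}^j\bigr)=\sum_{k\ge 1}(\xi t^kf)\,\frac{\partial^k\gamma^j}{k!}$ and extended bilinearly; recall $\frac{\partial^k\gamma^j}{k!}=\frac1k\gamma^j_k$, so for each mode $k$ the span of $\gamma^1_k,\gamma^2_k$ is a copy of $V^*$. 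Since each $\eta t^0$ acts on $R$ by a derivation with $[\eta t^0,\xi t^k]=[\eta,\xi]t^k$, and $\eta t^0f=0$, a direct computation shows that $C$ is $\mathfrak{sl}_2$-equivariant when $\mathfrak{sl}_2\otimes V^*$ carries the tensor of the adjoint action with the action on $V^*$, and $R$ carries the action $\eta\mapsto\eta t^0$. Hypothesis \eqref{eq:in} says precisely that $C$ annihilates $H\otimes{e^*}^1+G\otimes{e^*}^2$. This vector has nonzero component in each of the two non-isomorphic irreducible summands of $\mathfrak{sl}_2\otimes V^*$ (of dimensions $4$ and $2$): its weight $3$ part is a nonzero multiple of the highest weight vector $G\otimes{e^*}^2$, lying in the $4$-dimensional summand, and a short weight computation shows its image in the $2$-dimensional summand is also nonzero. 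Hence it generates $\mathfrak{sl}_2\otimes V^*$, so $C\equiv 0$, that is,
\begin{equation*}
\sum_{k\ge 1}(\xi t^kf)\,\frac{\partial^k\gamma^j}{k!}=0\qquad\text{for all }\xi\in\{H,G,F\},\ j\in\{1,2\}.
\end{equation*}
(The same six identities can instead be produced by applying $H,G,F$ to \eqref{eq:in} a couple of times.)

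\emph{Step 2: extracting $\xi t^kf=0$ for each $k$.} The $\mathfrak{sl}_2[t]$-action is linear in each $\gamma^i$ and preserves the total degree in the $\gamma$-variables, while $\frac{\partial^k\gamma^j}{k!}$ is linear in the single mode-$k$ variable $\gamma^j_k$; so we may take $f$ homogeneous of some $\gamma$-degree $e$ and induct on $e$. Writing $\xi t^kf=\sum_\mu a_{k,\mu}\gamma^\mu$ with $a_{k,\mu}\in\mathbb C[\beta,b,c]$ and reading off, in the identity for $j$, the coefficient of a $\gamma$-monomial in which a single $\gamma^j$-variable occurs with multiplicity one, one gets $a_{k,\mu}=0$ whenever $\mu$ involves no $\gamma^j$-variable; using $j=1$ and $j=2$ this forces every monomial of $\xi t^kf$ to contain both a $\gamma^1$- and a $\gamma^2$-variable, which already yields $\xi t^kf=0$ for $e\le 1$, the base of the induction. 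For $e\ge 2$ one differentiates the six identities in the $\gamma$-variables — using that $[\partial/\partial\gamma^i_m,\xi t^k]$ is a constant-coefficient combination of the operators $\partial/\partial\gamma^\ell_{m+k}$ — and feeds the identities inherited by the first $\gamma$-derivatives $\partial f/\partial\gamma^i_m$ (which are $\gamma$-homogeneous of degree $e-1$ and span a finite-dimensional $\mathfrak{sl}_2$-module) back into the induction. As a variant one may instead note that, by $[Gt^0,Ft^1]=Ht^1$, $[Gt^{n-1},Ft^1]=Ht^n$ and their brackets against $\mathfrak{sl}_2t^0$, the Lie algebra $\mathfrak{sl}_2[t]$ is generated by $\mathfrak{sl}_2t^0$ together with $Ft^1$, so a short induction on $n$ reduces the lemma to the single identity $Ft^1f=0$, which is then isolated from the finite sums $\sum_{k\ge 1}\tfrac1k(Ft^kf)\gamma^j_k=0$ by induction on the top mode occurring in $f$.

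\emph{Anticipated obstacle.} Step 1 is clean and essentially formal; the real work is Step 2. The subtlety is that after differentiating in the $\gamma$-variables one leaves the world of invariants — the $\partial f/\partial\gamma^i_m$ are only covariants (transforming in copies of $V$), and the identities they satisfy are those of Step 1 \emph{only up to explicit correction terms} coming from the commutators $[\partial/\partial\gamma^i_m,\xi t^k]$ and from the residual term $\xi t^mf$ produced by the differentiation. Thus the inductive statement must be set up carefully for $\gamma$-homogeneous elements lying in a finite-dimensional $\mathfrak{sl}_2$-module and satisfying such a perturbed family of contracted identities, and those corrections must be kept under control; the variant route avoids this but replaces it by a delicate, though elementary, extraction of the single mode-$1$ component $Ft^1f$ from a finite sum.
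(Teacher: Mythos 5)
Your Step 1 is correct, and it is essentially a conceptual repackaging of the paper's opening moves: applying $G$ twice and then $F$ to \eqref{eq:in} is exactly the computation showing that the $\mathfrak{sl}_2$-submodule of $\mathfrak{sl}_2\otimes V^*$ generated by $H\otimes {e^*}^1+G\otimes {e^*}^2$ is everything. (The paper only records the two $G$-contractions $\mathbb O_jf=\sum_{k\ge 1}(Gt^kf)\frac{\partial^k\gamma^j}{k!}=0$, $j=1,2$, which suffice; your six identities are a correct but slightly stronger byproduct.)

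The genuine gap is in Step 2, which you yourself flag as ``the real work'' but do not carry out, and neither of your sketched routes closes it. The core difficulty is that a contracted identity $\sum_{k=1}^N(\xi t^kf)\,\gamma^j_k/k=0$ does \emph{not} determine the individual components $\xi t^kf$, because these components are themselves polynomials in the variables $\gamma^j_k$ and there are nontrivial syzygies (for instance $(g\gamma^1_2)\cdot\gamma^1_1+(-g\gamma^1_1)\cdot\gamma^1_2=0$). So ``isolating $Ft^1f$ from the finite sum by induction on the top mode'' (Route B) is not a valid step as stated, and Route A leaves its inductive step as an unexecuted plan: after differentiating in $\gamma^{j'}_m$, the identity inherited by $\partial f/\partial\gamma^{j'}_m$ carries correction terms that include the very quantity $\xi t^mf$ you are trying to kill, and you do not explain how to break this circularity. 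The paper's actual mechanism — which is the real content of the lemma — is different: it promotes the contractions to operators $\mathbb O_j=\sum_{k\ge 1}\frac{\partial^k\gamma^j}{k!}Gt^k$ on $R$, notes that every iterated commutator $\mathbb P_n=[\cdots[[\mathbb O_2,\mathbb O_1],\mathbb O_1],\dots,\mathbb O_1]$ annihilates $f$, and computes $\mathbb P_n=\frac{(\partial\gamma^2)^{n+2}}{(n+2)!}Gt^{n+2}+\sum_{l\ge n+3}a_lGt^l$; a minimal-counterexample argument on the largest mode with $Gt^{N_0-1}f\ne 0$ then forces $Gt^kf=0$ for all $k\ge 2$, after which $\mathbb O_1f=\partial\gamma^1\,Gtf=0$ gives $Gtf=0$, and $Gt$ together with $\mathfrak{sl}_2$ generates $\mathfrak{sl}_2[t]$. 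You would need to supply something playing the role of this commutator computation (or a fully worked-out version of one of your two routes) before your argument constitutes a proof.
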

\begin{proof} Letting $G$ act on both sides of (\ref{eq:in}), we get
\begin{equation}\label{eq:inH}
-2\sum_{k\geq 1}(Gt^kf) \frac{\partial^k
\gamma^1}{k!}-
\sum_{k\geq 1}(Ht^kf) \frac{\partial^k \gamma^2}{k!}=0.
\end{equation}
Letting $G$ act on the both sides of the above equation, we get
\begin{equation}\label{eq:inG}
4\sum_{k\geq 1}(Gt^kf) \frac{\partial^k
\gamma^2}{k!}=0.
\end{equation}
Letting $F$ act on (\ref{eq:inG}), we get
\begin{equation}\label{eq:inF}
\sum_{k\geq 1}(Ht^kf) \frac{\partial^k
\gamma^2}{k!}+\sum_{k\geq 1}(Gt^kf) \frac{\partial^k
\gamma^1}{k!}=0.
\end{equation}
Then from (\ref{eq:inH}) and (\ref{eq:inF}), we get
$$\sum_{k\geq 1}(Gt^kf) \frac{\partial^k
\gamma^1}{k!}=0.$$
Denote $$\mathbb O_1=\sum_{k\geq 1}\frac{\partial^k
\gamma^1}{k!} Gt^k ,\quad \mathbb O_2=\sum_{k\geq 1}\frac{\partial^k
\gamma^2}{k!} Gt^k .$$
We have $\mathbb O_1f=\mathbb O_2f=0$. Let
$$\mathbb P_n=[\cdots[[\mathbb O_2,\mathbb O_1],\underbrace{\mathbb O_1]\cdots,\mathbb O_1]}_n,\quad n\geq 0.$$
Then $\mathbb P_n f=0$. We calculate this operator as follows.
\begin{align*}\mathbb P_n
&=[\cdots[\sum_{k\geq 1}\frac{\partial^k
\gamma^2}{k!}\sum_{l\geq 1} (Gt^k\frac{\partial^l\gamma^{1_1}}{l_1!})Gt^{l_1},\underbrace{\mathbb O_1]\cdots,\mathbb O_1]}_n\\
&=[\cdots[\sum_{k\geq 1}\frac{\partial^k
\gamma^2}{k!}\sum_{l_1> k} (\frac{\partial^{l_1-k}\gamma^2}{l_1(l_1-1-k)!})Gt^{l_1},\underbrace{\mathbb O_1]\cdots,\mathbb O_1]}_n\\
&=\sum_{k\geq 1}\frac{\partial^k
\gamma^2}{k!}\sum_{l_1> k} (\frac{\partial^{l_1-k}\gamma^2}{l_1(l_1-1-k)!})\sum_{l_2> l_1} 
(\frac{\partial^{l_2-l_1}\gamma^2}{l_2(l_2-1-l_1)!})\cdots \sum_{l_{n+1}> l_n} 
(\frac{\partial^{l_{n+1}-l_n}\gamma^2}{l_{n+1}(l_{n+1}-1-l_n)!}) Gt^{l_{n+1}}\\
&=\frac{(\partial\gamma^2)^{n+2}}{(n+2)!}Gt^{n+2}+\sum_{l\geq n+3}a_lGt^l.
\end{align*}
Notice that there is some $N$, such that when $n\geq N$, $Gt^n f=0$. Let $N_0$ be the minimal integer with this property. If $N_0\geq 3$,
$$0=\mathbb P_{N_0-3}f=\frac{(\partial\gamma^2)^{n+2}}{(n+2)!}Gt^{N_0-1}f+\sum_{l\geq N_0}a_lGt^l 
f=\frac{(\partial\gamma^2)^{n+2}}{(n+2)!}Gt^{N_0-1}f.$$
Thus $Gt^{N_0-1}f=0$, which contradicts the minimality of $N_0$.
So $Gt^2f=0$. Thus $0=O_1f=\gamma^1 Gt f$, we conclude $Gt f=0$. Since $Gt$ and $\mathfrak{sl}_2$ generate the 
Lie algebra $\mathfrak{sl}_2[t]$, $f$ is $\mathfrak{sl}_2[t]$ invariant.
\end{proof}

  \section{Global sections of the chiral de Rham complex on Kummer surfaces}\label{sec.global}
 \subsection{Chiral de Rham complex on Kummer surfaces}
  Let $X$ be a Kummer surface. Then there is an abelian surface
   $A=\mathbb C^2/\Lambda$ with $\Lambda\cong \mathbb Z^4$ and an involution $\iota: A\to A$ defined 
   by $\iota(a)=-a$. The quotient $\tilde X=A/{\iota}$ has $16$  ordinary double points $\{p_i\}$. 
   $X$ can be constructed by blowing up at these singular points. Let $\pi: X\to \tilde X$, $C_i=\pi^{-1}(p_i)$ are genus zero curves.
  Let $\tilde z^1, \tilde z^2$ be coordinates of $\mathbb C^2$, $\tilde z^1, \tilde z^2$ can be
   regarded as  coordinates on $X\backslash{\{C_i\}}$. Assume $p_0=(0,0)$, we can use two charts to 
   cover a neighborhood $C_0$: $(U_1, z^1,  z^2), (U_2, \bar z^1, \bar z^2)$. Their relations with $\tilde z^1, \tilde z^2$ are
  $$z^1=(\tilde z^1)^2, \quad  z^2= \frac{\tilde z^2}{\tilde z^1};$$
  $$\bar z^1=(\tilde z^2)^2, \quad \bar z^2= \frac{\tilde z^1}{\tilde z^2}.$$
  These relations give  relations between $\tilde \beta, \partial\tilde \gamma, \tilde b, \tilde c$
  and $\beta, \partial\gamma, b, c$ of the chiral de Rham complex $\cQ=\cQ_X$ on $X$.
  Then by the equations (\ref{chi.co3}), we get the corresponding relations for $\beta, \partial\gamma, b, c$ 
  of the refined associated graded sheaf $\text{gr}^2(Q)$ on $X$:  
  \begin{align}   \label{kum.coo}
  \partial^{l+1} \tilde \gamma^1&=\partial^l(\frac t 2 \partial \gamma^1)
  =\frac t 2\partial^{l+1}\gamma^1-\frac{t^3}4\sum_{k=1}^l\partial^{l-k}(\partial\gamma^1\partial^k\gamma^1)+O(t^5);   \nonumber  \\
  \partial^{l+1}\tilde \gamma^2&=\partial^{l}(\frac 1 t\partial \gamma^2+\frac t 2 \gamma^2  \partial \gamma^1)\nonumber\\
  &=t\partial^{l+1}\gamma^2+\frac{t\gamma^2}2\partial^{l+1}\gamma^1
  +\frac t 2 \sum_{k=1}^l\partial^{l-k}(\partial\gamma^1\partial^k\gamma^2)+\frac t 2 \sum_{k=1}^l\partial^{l-k}(\partial\gamma^2\partial^k\gamma^1)+O(t^3);\nonumber \\
  \partial^l\tilde c^1&=\partial^l(\frac t 2 c^1)=\frac t 2\partial^{l}c^1-\frac{t^3}4\sum_{k=0}^{l-1}
  \partial^{l-k-1}(\partial\gamma^1\partial^k c^1)+O(t^5);           \nonumber \\
  \partial^l\tilde c^2&=\partial^l(\frac 1 tc^2+\frac t 2 \gamma^2  c^1)\nonumber\\
  &=t\partial^{l}c^2+\frac{t\gamma^2}2\partial^{l}c^1
  +\frac t 2 \sum_{k=0}^{l-1}\partial^{l-k-1}(\partial\gamma^1\partial^kc^2)+\frac t 2 \sum_{k=0}^{l-1}
  \partial^{l-k-1}(\partial\gamma^2\partial^kc^1)+O(t^3); 
  \end{align}
   \begin{align}
  \partial^l\tilde b^1&=\partial^l(\frac 2 tb^1-t  \gamma^2  b^2)     \nonumber \\
  &=\frac 2 t\partial^{l}b^1-t\gamma^2\partial^{l}b^2
  +t\sum_{k=0}^{l-1}\partial^{l-k-1}(\partial\gamma^1\partial^kb^1)-t \sum_{k=0}^{l-1}\partial^{l-k-1}
  (\partial\gamma^2\partial^kb^2)+O(t^3);    \nonumber \\
  \partial^l\tilde b^2&= \partial^l(t  b^2) = 
  t\partial^{l}b^2-\frac{t^3}2\sum_{k=0}^{l-1}\partial^{l-k-1}(\partial\gamma^1\partial^k b^2)+O(t^5);           \nonumber  
  \end{align} 
  \begin{align}
  \partial^l\tilde \beta^1&=\partial^l(\frac 2 t \partial^l\beta^1-t\gamma^2\beta^2)\nonumber \\
  &=\frac 2 t\partial^{l}\beta^1-t\gamma^2\partial^{l}\beta^2
  +t\sum_{k=0}^{l-1}\partial^{l-k-1}(\partial\gamma^1\partial^k\beta^1)-t 
  \sum_{k=0}^{l-1}\partial^{l-k-1}(\partial\gamma^2\partial^k\beta^2)+O(t^3);    \nonumber \\
  \partial^l\tilde\beta^2&=\partial^l(t\beta^2)=t\partial^{l}\beta^2-
  \frac{t^3}2\sum_{k=0}^{l-1}\partial^{l-k-1}(\partial\gamma^1\partial^k \beta^2)+O(t^5) .   \nonumber \end{align}
  Here $t=\frac 1{\sqrt{\gamma_1}}$, $\partial^{l} t=-\frac 1 2t^3\partial^l \gamma^{1}+O(t^5).$

 \subsection{Global sections of $\text{gr}^2({\cQ})$}
 We introduce the following notation:  \\
 when $(x,y)=(\beta,\partial\gamma),(\beta,c),(b,\partial \gamma),(b,c),
 (\tilde \beta,\partial\tilde \gamma),(\tilde \beta,\tilde c),(\tilde b,\partial \tilde \gamma),(\tilde b,\tilde c)$,
 let
 $$A(x,y,i,j)=\partial^ix^1\partial^jy^1+\partial^ix^2\partial^jy^2,$$
 when $(x,y)=(\beta,\beta),(\beta,b),(b,b),(c,c),(\partial \gamma,\partial \gamma),(c,\partial \gamma),
 (\tilde\beta,\tilde\beta),(\tilde\beta,\tilde b),(\tilde b,\tilde b),(\tilde c,\tilde c),
 (\partial \tilde\gamma,\partial\tilde \gamma),(\tilde c,\partial \tilde\gamma)$,
 let
 $$A(x,y,i,j)=\partial^ix^1\partial^jy^2-\partial^ix^2\partial^jy^1.$$
 
 By identifying the elements $\beta, \partial \gamma, b, c \in R$ in Section \ref{sec.invt} with those in $\text{gr}^2(\cQ)(U_1)$, $R$ can be regarded as a subring of $\text{gr}^2(\cQ)(U_1)$. 
The generators of $R_1^{\mathfrak{sl}_2}$ in (\ref{gen.R2}) are
 $$A_{\beta\partial\gamma}=A(\beta,\partial \gamma,0,0),\quad A_{bc}=A(b,c,0,0),
 \quad A_{\beta c}=A(\beta, c,0,0), \quad A_{\partial\gamma b}=A(\partial \gamma,b,0,0),$$
 $$A_{cc}=c_1c_2,\quad A_{bb}=b_1b_2,
 \quad A_{\beta b}=A(\beta,b,0,0),\quad A_{\partial \gamma c}=A(\partial\gamma,c,0,0).$$
Since $\partial$ is a global operator, 
the $\partial$-ring generated by these elements are in $\text{gr}^2(\cQ)(X)$.
We want to prove 
\begin{thm} \label{thm.glo}
 $A_{\beta\partial\gamma}, A_{bc}, A_{\beta c}, A_{\partial\gamma b}, A_{cc},A_{bb} ,A_{\beta b},A_{\partial\gamma c}$ 
 generate $\text{gr}^2(\cQ)(X)$ as a $\partial$-ring.

 \end{thm}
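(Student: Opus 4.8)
The plan is to use the local-to-global structure provided by the filtrations developed in Section \ref{sec.ag}, together with the $\gs\gl_2[t]$-invariant theory of Section \ref{sec.invt}, applied chart by chart on $X$. By Lemma \ref{lem.q1}, it suffices to exhibit a finite set of global sections of $\text{gr}^2(\cQ)$ whose images generate $\text{gr}^2(\cQ)(X)$ as a $\partial$-ring; the eight elements $A_{\beta\partial\gamma},\dots,A_{\partial\gamma c}$ are visibly global (they transform as the Weyl generators of $R_1^{\gs\gl_2}$ under the change of coordinates on the complement of the exceptional curves, and $\partial$ is a global operator), so the content is that they \emph{span} everything. The strategy is therefore: (1) away from the sixteen exceptional curves $C_i$, $\text{gr}^2(\cQ)$ is the sheaf associated to the bundle in \eqref{bun.chi3} on the abelian surface $A$, pulled back $\iota$-invariantly; (2) a global section of $\text{gr}^2(\cQ)$ on $X$ restricts to an $\iota$-invariant section on $A\setminus\{0\text{-orbit}\}$, hence extends across the punctures, hence (by the flat trivialization of the tangent bundle of $A$, exactly as in Kobayashi's theorem quoted in the introduction) is a constant-coefficient combination of the $\gs\gl_2$-invariant tensor monomials — this is precisely the ring $R^{\gs\gl_2[t]}$ of Theorem \ref{thm.standard}, via the standard-monomial basis; (3) conversely one must check that such an invariant tensor, when transplanted via the charts $(U_1,U_2)$ in \eqref{kum.coo} near each $C_i$, actually extends holomorphically across $C_i$ — this is where the precise form of the coordinate transformations \eqref{kum.coo}, with their powers of $t=\gamma_1^{-1/2}$, is used, and it is the reason one gets \emph{exactly} the eight generators and no more.

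In more detail: fix the chart $U_1$ near $C_0$ and identify $\text{gr}^2(\cQ)(U_1)$ with a localization of the polynomial $\partial$-ring containing $R=\mathbb C[\partial^l\beta^i,\partial^{l+1}\gamma^i,\partial^l b^i,\partial^l c^i]$. I would first argue that a global section $a\in\text{gr}^2(\cQ)(X)$, when written in the $U_1$-coordinates, must be $\gs\gl_2$-invariant: the $\gs\gl_2$-action is the one coming from the deck transformations/the $SU_2\subset Sp(1)$ acting on the hyperkähler structure, and invariance on $A$ (minus a finite set, hence on all of $A$ by Hartogs) forces $\gs\gl_2$-invariance of the fiber expression, by Kobayashi's theorem. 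Then the criterion of Lemma \ref{lem:in} (or direct inspection of \eqref{kum.coo}) upgrades $\gs\gl_2$-invariance to $\gs\gl_2[t]$-invariance, using that the transition functions to $\tilde\gamma$ involve the formal parameter $t$ and that a genuine global section has no pole as $t\to\infty$ (equivalently $\gamma_1\to 0$) along $C_0$: matching the $t$-expansions order by order is exactly equation \eqref{eq:in}. Hence $a\in R^{\gs\gl_2[t]}$. By Theorem \ref{thm.invt} and \eqref{gen.R2}, $R^{\gs\gl_2[t]}$ is generated as a $\partial$-ring by the eight elements $A_{xy}$, which are the $A(x,y,0,0)$ listed in the theorem. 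Finally I would verify that each of these eight $A_{xy}$, hence the whole $\partial$-ring they generate, does extend across all the $C_i$ (not just $C_0$): by the symmetry of the construction it suffices to do $C_0$, and one checks directly from \eqref{kum.coo} that $A_{\beta\partial\gamma}$, $A_{bc}$, $A_{\beta c}$, $A_{\partial\gamma b}$, $A_{cc}$, $A_{bb}$, $A_{\beta b}$, $A_{\partial\gamma c}$ are invariant under $(z^1,z^2)\leftrightarrow(\bar z^1,\bar z^2)$ and regular in both charts — the factors of $t$ and $t^{-1}$ in \eqref{kum.coo} cancel in precisely these bilinear combinations.

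The main obstacle, and the step I would spend the most care on, is step (3) combined with the \emph{reverse} direction of the argument: showing that a section holomorphic near $C_0$ in the chiral sense is forced to lie in $R^{\gs\gl_2[t]}$ and not merely in some larger ring. This requires controlling the $t$-adic expansions in \eqref{kum.coo} to all orders — the displayed $O(t^3)$, $O(t^5)$ tails are not harmless, because $\partial^l t$ itself reintroduces factors of $t^3$ and more $\gamma^1$'s — and then arguing that the vanishing of every negative power of $t$ in the transformed expression is equivalent to the system of linear conditions in Lemma \ref{lem:in}. Concretely: an element of $\text{gr}^2(\cQ)(U_1)$ that also lifts to $\text{gr}^2(\cQ)(U_2)$ must, after substituting $t=\gamma_1^{-1/2}$, have no term with a negative power of $\gamma_1$; writing out this condition in terms of the $\gs\gl_2$-weight decomposition gives exactly \eqref{eq:in}, and then Lemma \ref{lem:in} closes the loop. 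I would also need the (standard) fact that a $\text{gr}^2(\cQ)$-section on $X\setminus\bigcup C_i$ that is $\iota$-invariant on $A$ and bounded near the $C_i$ extends — this is Hartogs plus the observation that $\text{gr}^2(\cQ)$ is a sheaf of sections of a vector bundle, so normal crossings of codimension $\geq 2$ (the origin in $A$) impose no obstruction. Granting these, Lemma \ref{lem.q1} then also yields, as a bonus, that the eight $A_{xy}$ generate $\text{gr}(\cQ)(X)$ as a $\partial$-ring, setting up the passage to $\cQ(X)$ itself in the next subsection.
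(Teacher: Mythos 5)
Your overall route is the paper's: restrict to the abelian cover, invoke Kobayashi's theorem, then use the $t$-expansion of the blow-up transition functions \eqref{kum.coo} together with Lemma \ref{lem:in} to upgrade to $\gs\gl_2[t]$-invariance, and finish with Theorem \ref{thm.invt}. But there is a genuine gap in the two middle steps, and it sits exactly where the paper deploys a device you never mention: the further filtration by degree in $\partial^{l+1}\gamma,\partial^l c$ and its associated graded $\text{gr}^3(\cQ)$. First, Kobayashi's theorem concerns holomorphic sections of the classical tensor bundle \eqref{bun.chi3}; on $X$ that bundle is $\text{gr}^3(\cQ)$, not $\text{gr}^2(\cQ)$, whose transition functions near the exceptional curves are nonlinear. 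So Kobayashi (via Lemmas \ref{lem:gl3} and \ref{lem:gg}) only tells you that the \emph{lowest-degree part} $f_a$ of a global section $F=f_a+f_{a+1}+F_{a+2}$ is a polynomial in the quadratics $A(\tilde x,\tilde y,i,j)$ --- not that $F$ itself is $\gs\gl_2$-invariant, which is a hypothesis of Lemma \ref{lem:in}. Second, the condition ``no negative powers of $\gamma^1$ along $C_0$'' kills the total $t^2$-coefficient of the transformed section, but that coefficient equals the left side of \eqref{eq:in} evaluated on $f_a$ \emph{plus} contributions of strictly higher $(\partial\gamma,c)$-degree coming from $F_{a+2}$; only by separating degrees does one get \eqref{eq:in} for $f_a$ alone. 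Hence Lemma \ref{lem:in} yields $\gs\gl_2[t]$-invariance only of $f_a$; one must then note that $f_a$ is consequently itself a global section (a polynomial in the eight $A_{xy}$ and their derivatives), subtract it from $F$, and induct on $a$. Your one-shot conclusion ``hence $a\in R^{\gs\gl_2[t]}$'' skips both the leading-term reduction and this induction; the difficulty you flag about ``controlling the $t$-adic expansions to all orders'' is precisely what the leading-term filtration dissolves, since only the $t^2$ term of the lowest-degree piece ever needs to be examined.

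A smaller but real error in your first paragraph: flatness of $TA$ plus $\iota$-invariance and Hartogs extension only put a global section into the polynomial ring $\tilde R$, and the $SU(2)$-invariant subring one then obtains from Kobayashi is $\tilde R^{SU(2)}$, generated by \emph{all} the $A(\tilde x,\tilde y,i,j)$ with $i,j\geq 0$ independently. This is strictly larger than $R^{\gs\gl_2[t]}$, which contains only the particular combinations $\partial^k A_{xy}$ (for instance $A(\tilde\beta,\tilde b,1,0)$ by itself is $SU(2)$-invariant but is not annihilated by $Gt$). Your sentence ``this is precisely the ring $R^{\gs\gl_2[t]}$'' therefore asserts something false; the entire content of the theorem is the collapse from $\tilde R^{SU(2)}$ down to $R^{\gs\gl_2[t]}$, and that collapse comes only from regularity across the sixteen curves $C_i$. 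Your detailed paragraph walks this back correctly, but the repair requires the leading-term argument above.
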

  For any $p \in U_1$, the composition 
 $$R\to \text{gr}^2(\cQ)(U_1)\to \text{gr}^2(\cQ)(U_1)|_p$$ is an isomorphism.
 Since $A_{\beta\partial\gamma}, A_{bc}, A_{\beta c}$, $ A_{\partial\gamma b}, A_{cc},A_{bb} ,A_{\beta b},A_{\partial\gamma c}$  generate $R^{\mathfrak{sl}_2[t]}$,  we have
 
\begin{cor} 
\label{thm.inv}$$\text{gr}^2(\cQ)(X)\cong  R^{\mathfrak{sl}_2[t]} \cong \text{gr}^2(\cQ)|_p^{\mathfrak{sl}_2[t]}.$$
 \end{cor}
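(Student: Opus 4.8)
The plan is to deduce Corollary~\ref{thm.inv} directly from Theorem~\ref{thm.glo}. By Theorem~\ref{thm.glo}, the eight elements $A_{\beta\partial\gamma}, A_{bc}, A_{\beta c}, A_{\partial\gamma b}, A_{cc}, A_{bb}, A_{\beta b}, A_{\partial\gamma c}$ generate $\text{gr}^2(\cQ)(X)$ as a $\partial$-ring. Fix a point $p\in U_1$. First I would observe that the restriction map $\text{gr}^2(\cQ)(U_1)\to \text{gr}^2(\cQ)|_p$ is well-defined because, in the coordinate chart $U_1$, the structure sheaf $\mathcal O(U_1)$ specializes to $\mathbb C$ at $p$ and the generators $\beta^i,\partial\gamma^i,b^i,c^i$ of the $\partial$-ring have no further relations; more precisely, $R=\mathbb C[\partial^l\beta^i,\partial^{l+1}\gamma^i,\partial^lb^i,\partial^lc^i]$ is exactly the fiber $\text{gr}^2(\cQ)|_p$ once we evaluate $\gamma^i$ at the coordinates of $p$, and the composite $R\hookrightarrow \text{gr}^2(\cQ)(U_1)\to \text{gr}^2(\cQ)|_p$ is the identity on generators, hence an isomorphism of $\partial$-rings.

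Next I would identify $\text{gr}^2(\cQ)(X)$ inside $\text{gr}^2(\cQ)(U_1)$. A global section of $\text{gr}^2(\cQ)$ restricts to a section over $U_1$, so we get an injective $\partial$-ring map $\text{gr}^2(\cQ)(X)\hookrightarrow \text{gr}^2(\cQ)(U_1)$. Under the identification of $\text{gr}^2(\cQ)(U_1)$ with its fiber $R$ at $p$, the image lands inside the subring $R^{\mathfrak{sl}_2[t]}$: indeed the transition functions on the Kummer surface near the exceptional curve $C_0$, recorded in the equations~\eqref{kum.coo}, are precisely the infinitesimal action of $\mathfrak{sl}_2[t]$ written in terms of the parameter $t=\gamma_1^{-1/2}$, so a section that extends holomorphically across $C_0$ (equivalently, is a well-defined global section) must be annihilated by this action, i.e. must be $\mathfrak{sl}_2[t]$-invariant. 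Conversely, the eight generators listed in Theorem~\ref{thm.glo} coincide with the generators $A_{\beta\partial\gamma},\dots,A_{\partial\gamma c}$ of $R^{\mathfrak{sl}_2[t]}$ from~\eqref{gen.R2}, which by Theorem~\ref{thm.invt} together with Weyl's theorems generate $R^{\mathfrak{sl}_2[t]}$ as a $\partial$-ring. Since by Theorem~\ref{thm.glo} these eight elements generate $\text{gr}^2(\cQ)(X)$, the image of $\text{gr}^2(\cQ)(X)$ in $R$ is exactly $R^{\mathfrak{sl}_2[t]}$. Combining the two isomorphisms gives
$$\text{gr}^2(\cQ)(X)\cong R^{\mathfrak{sl}_2[t]}\cong \text{gr}^2(\cQ)|_p^{\mathfrak{sl}_2[t]}.$$

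The main obstacle is the part requiring Theorem~\ref{thm.glo}, namely that the eight invariants actually \emph{generate} all global sections (not merely that they are global and invariant); this is where the real work lies, and it must be established separately—one needs to check that a section which is invariant in the chart $U_1$ and also extends across the other fifteen exceptional curves and over $A\setminus\{C_i\}$ is automatically in the $\partial$-ring generated by these eight elements. Assuming Theorem~\ref{thm.glo}, the corollary is then a formal consequence: the only subtlety to be careful about is that the isomorphism $\text{gr}^2(\cQ)(U_1)\cong R$ identifies $\mathcal O(U_1)$-coefficients with their values at $p$, so one must argue that a global section, being $\mathfrak{sl}_2[t]$-invariant, has coefficients that are forced to be constants—this follows because the invariant ring $R^{\mathfrak{sl}_2[t]}$ is defined over $\mathbb C$ with no remaining $\gamma_1$-dependence after the substitution $t=\gamma_1^{-1/2}$, consistent with the explicit list~\eqref{gen.R2}.
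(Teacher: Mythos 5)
Your proposal is correct and takes essentially the same route as the paper: Corollary \ref{thm.inv} is deduced formally from Theorem \ref{thm.glo} together with the isomorphism $R\cong \text{gr}^2(\cQ)|_p$ given by the composition $R\to \text{gr}^2(\cQ)(U_1)\to \text{gr}^2(\cQ)|_p$, and the fact (Theorem \ref{thm.invt} plus Weyl's fundamental theorems) that the eight quadratics generate $R^{\mathfrak{sl}_2[t]}$ as a $\partial$-ring, with the real work deferred to the proof of Theorem \ref{thm.glo} exactly as you say. The one inaccurate side remark is your closing justification that coefficients of a global section are constant ``because $R^{\mathfrak{sl}_2[t]}$ has no remaining $\gamma_1$-dependence''---the paper's actual reason is that such coefficients pull back to holomorphic functions on the compact abelian surface $A$ and are therefore constant---but this does not affect the corollary, which needs only Theorem \ref{thm.glo}.
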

By Theorem~\ref{thm.standard}, $R^{\mathfrak{sl}_2[t]}$ has a standard monomial theory for $S$, so $\text{gr}^2(\cQ)(X)$ has a standard monomial theory. In particular,  it has a canonical basis.

  Now we prove the theorem. If $g$ is a global section of $\text{gr}^2(\cQ)$, then $\pi_*(g|_{X\backslash \{C_i\}})$ is defined on $\tilde X\backslash\{p_i\}$, which can be pulled back to $A$, and extended to a global section on $A$. For the abelian variety $A$, in coordinates $(\tilde z_1,\tilde z_2)$,
   $$\tilde R=\text{gr}^2(\cQ)(A)=\mathbb C[\partial^k\tilde\beta^i,\partial^{k+1}\tilde \gamma^i,\partial^k\tilde b^i,
    \partial^k\tilde c^i],
   \,i=1,2,\,k\geq 0.$$
  So in the coordinates $(\tilde z^1,\tilde z^2)$, we naturally have $\text{gr}^2(\cQ)(X)\subset \tilde R$.

  From \cite{Ko}, the global sections of tensors of tangent bundles and cotangent bundles
 on a  $K3$ surface  are parallel and  $G=SU(2)$ invariant at a point on the surface. Here $G$ is the
 holonomy group of $K3$. It acts naturally on the fibres of tangent bundles and cotangent bundles. 
 For a fixed point $q\in X\backslash \{C_i\}$, $\tilde R\cong \tilde R|_q$ is  isomorphic to  the fibre of the bundle (\ref{bun.chi3}) at q. 
 Then we have
 \begin{lemma}\label{lem:gl3}
 For the Kummer surface $X$
 $$\text{gr}^3(\cQ)(X)\cong \text{gr}^3(\cQ)(X)|_p= \tilde R|_p^{SU(2)}\cong \tilde R^{SU(2)}. $$
 \end{lemma}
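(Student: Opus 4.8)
The plan is to identify $\text{gr}^3(\cQ)$ with the sheaf of sections of the classical tensor bundle in~\eqref{bun.chi3} and then invoke Kobayashi's theorem~\cite{Ko} on parallel tensors on a $K3$ surface. The starting point is the observation, already recorded above, that $\text{gr}^3(\cQ)$ is the sheaf associated to the bundle $\text{Sym}\bigl(\bigoplus_{n\ge 1}(TM\oplus T^*M)\bigr)\otimes\bigwedge^*\bigl(\bigoplus_{n\ge 1}(TM\oplus T^*M)\bigr)$, since its coordinate transformation formulas are purely tensorial (the $\partial^n x$ transform like sections of the $n$-th copy of $TM$ or $T^*M$ with no lower-order correction terms). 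A Kummer surface $X$ is a $K3$ surface, hence hyperkähler; its Riemannian holonomy group is $SU(2)=Sp(1)$. Kobayashi's theorem states that on a $K3$ surface every global holomorphic section of any bundle built from tensor products of $TM$ and $T^*M$ is parallel with respect to the Levi-Civita connection, and therefore, by holonomy, is determined by a single $SU(2)$-invariant element of the fibre over any point. Conversely every such invariant element extends to a parallel, hence holomorphic, global section.

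The steps, in order, are the following. First I would fix a point $q\in X\setminus\{C_i\}$ and note that the fibre of the bundle~\eqref{bun.chi3} at $q$ is canonically identified with $\tilde R|_q$, and that evaluation at $q$ gives a ring isomorphism $\tilde R\cong\tilde R|_q$ (the ring $\tilde R=\text{gr}^2(\cQ)(A)$ is a polynomial ring on the generators $\partial^k\tilde\beta^i,\partial^{k+1}\tilde\gamma^i,\partial^k\tilde b^i,\partial^k\tilde c^i$, and over the trivializing chart these generators are free, so evaluation at a point is an isomorphism). Second, because the bundle~\eqref{bun.chi3} is an infinite direct sum of finite-rank tensor bundles and the holonomy action respects this decomposition, Kobayashi's theorem applies summand by summand: $H^0(X,\text{gr}^3(\cQ))$ consists exactly of the parallel sections, and restriction to $q$ identifies this space with $\bigl(\tilde R|_q\bigr)^{SU(2)}$, where $SU(2)$ acts on each $TM$- and $T^*M$-factor via the holonomy representation (the standard representation and its dual). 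Third, transporting the isomorphism $\tilde R\cong\tilde R|_q$ through, one gets $\text{gr}^3(\cQ)(X)\cong \tilde R|_q^{SU(2)}\cong\tilde R^{SU(2)}$, which is the claim once one checks that the point $p$ in the statement (one of the $p_i$, or any point — the invariant ring is independent of the choice since any two fibres are related by parallel transport, which is an $SU(2)$-equivariant isomorphism) may be used in place of $q$. In particular the natural map $\text{gr}^3(\cQ)(X)\to\text{gr}^3(\cQ)(X)|_p$ is an isomorphism because a parallel section vanishing at one point vanishes identically.

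One subtlety I would be careful about is that the global sections of $\text{gr}^3(\cQ)$ should literally be sections over $X$, not merely over $X\setminus\{C_i\}$; a holomorphic tensor section defined away from the exceptional curves $C_i$ extends across them, since each $C_i$ has codimension two in the surface (a point blown up to a curve is still codimension $\ge 1$; more precisely, a section of a locally free sheaf on $X\setminus\{C_i\}$ that is pulled back from a bounded section on $\tilde X\setminus\{p_i\}$ — equivalently from $A$ minus the fixed locus of $\iota$, which is a finite set of points — extends by Hartogs, using that $A$ is smooth and the $2^{16}$ fixed points form a codimension-two subset). This matches the reduction performed earlier in the section: a global section $g$ of $\text{gr}^2(\cQ)$ restricts to $X\setminus\{C_i\}$, push-forward along $\pi$ and pullback to $A$ give a section on $A$ minus a finite set, which extends to all of $A$; so indeed $\text{gr}^2(\cQ)(X)\subset\tilde R$ in the chart $(\tilde z^1,\tilde z^2)$.

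The main obstacle is not any single hard computation but rather the bookkeeping needed to match the grading and the group actions precisely: one must verify that the holonomy $SU(2)$ acts on the fibre $\tilde R|_q$ exactly as $\gs\gl_2$ acts in the representation set up in Section~\ref{sec.invt} (namely, that $\tilde\beta,\tilde b$ transform in the standard representation and $\tilde\gamma,\tilde c$ in its dual, compatibly with the $\partial^k$-grading), so that ``$SU(2)$-invariant'' here agrees with the invariant ring $\tilde R^{SU(2)}$ featuring in the later identification with $R^{\gs\gl_2[t]}$. Once this dictionary is fixed, the lemma is an immediate consequence of Kobayashi's theorem together with the elementary fact that a parallel section of a vector bundle is determined by its value at a single point.
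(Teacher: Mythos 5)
Your proposal follows essentially the same route as the paper: identify $\text{gr}^3(\cQ)$ with the sheaf of sections of the classical tensor bundle in~\eqref{bun.chi3}, apply Kobayashi's theorem that holomorphic tensor sections on a $K3$ surface are parallel and hence given by $SU(2)$-holonomy invariants of a fibre, and use the identification $\tilde R\cong\tilde R|_q$ of the polynomial ring with that fibre. The extra care you take (extension across the exceptional curves, summand-by-summand application, matching of the group actions) only fills in details the paper leaves implicit.
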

  $A(\tilde x,\tilde y,i,j)$, $x,y=\beta,\partial \gamma, b,c$ and $i,j\geq 0$
 are global sections of $\text{gr}^3(\cQ)$. On the other hand, by the first fundamental theorem of invariant theory for $SU(2)$, they generate the ring $\tilde R^{SU(2)}$. Thus by Lemma~\ref{lem:gl3}, we have
 \begin{lemma}\label{lem:gg}
    $A(\tilde x,\tilde y,i,j)$, $x,y=\beta,\partial \gamma, b,c$ and $i,j\geq 0$ generate $\text{gr}^3(\cQ)(X).$
 \end{lemma}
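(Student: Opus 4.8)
The plan is to prove the lemma by combining Lemma~\ref{lem:gl3} with the first fundamental theorem of invariant theory for $SU(2)$, reducing the determination of the global sections to a classical (super-)invariant-theoretic computation on a single fibre.

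First I would invoke Lemma~\ref{lem:gl3}: evaluation at the fixed point $p\in X\setminus\{C_i\}$ is a ring isomorphism $\text{gr}^3(\cQ)(X)\xrightarrow{\sim}\tilde R^{SU(2)}$, where $\tilde R\cong\tilde R|_p$ is the fibre at $p$ of the bundle \eqref{bun.chi3} and $SU(2)$ is the holonomy group of the Kummer surface acting on $T_pX$ and $T^*_pX$. As a super vector space $\tilde R$ is the (super-)polynomial algebra on the variables $\partial^k\tilde\beta^i,\partial^{k+1}\tilde\gamma^i,\partial^k\tilde b^i,\partial^k\tilde c^i$ ($i=1,2$, $k\ge0$); the $\tilde\beta$- and $\tilde b$-type variables each span a copy of the standard representation $V=T_pX$, while the $\tilde\gamma$- and $\tilde c$-type variables each span a copy of $V^*$, which is isomorphic to $V$ since $SU(2)=Sp(1)$. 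Thus $\tilde R$ is the (super-)polynomial algebra on countably many copies of the defining representation of $Sp(1)$, and the task reduces to exhibiting generators of $\tilde R^{SU(2)}$ and matching them with the elements $A(\tilde x,\tilde y,i,j)$.

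Next I would carry out the matching. By its very definition, $A(\tilde x,\tilde y,i,j)$ is the value on the pair $(\partial^i\tilde x,\partial^j\tilde y)$ of an $SU(2)$-invariant bilinear form: the symplectic form on $V$ when both arguments are of $\tilde\beta/\tilde b$-type, the symplectic form on $V^*$ when both are of $\tilde\gamma/\tilde c$-type, and the canonical pairing between $V$ and $V^*$ in the mixed case (geometrically, contraction against the nowhere-vanishing holomorphic symplectic form of the $K3$ surface $X$, respectively against the tautological pairing between $T$ and $T^*$). Hence each $A(\tilde x,\tilde y,i,j)$ lies in $\tilde R^{SU(2)}$ and, by Lemma~\ref{lem:gl3}, extends to a (parallel, hence global) section of $\text{gr}^3(\cQ)$ on $X$. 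Conversely, the first fundamental theorem of invariant theory for $Sp(1)=SU(2)$ acting on a direct sum of copies of its defining representation --- Weyl~\cite{We} for the even variables, together with its standard super analogue, in which antisymmetric contractions are replaced by symmetric ones, for the odd variables $b,c$ --- asserts that $\tilde R^{SU(2)}$ is generated as a super-commutative $\mathbb C$-algebra precisely by the pairwise contractions of the generators, i.e.\ by the family $\{A(\tilde x,\tilde y,i,j):x,y\in\{\beta,\partial\gamma,b,c\},\ i,j\ge0\}$. Transporting this back along the isomorphism of Lemma~\ref{lem:gl3} proves the lemma.

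The substantive ingredient is Lemma~\ref{lem:gl3} itself --- Kobayashi's theorem that every global holomorphic tensor on a $K3$ surface is parallel, hence $SU(2)$-invariant at a point --- which is what legitimizes replacing the sheaf $\text{gr}^3(\cQ)$ by a fibrewise invariant problem; granting it, the remaining steps are essentially bookkeeping. The two points I would be careful about are the explicit identification of the $A(\tilde x,\tilde y,i,j)$ with Weyl's quadratic generators (tracking which pairings are symmetric and which antisymmetric, and the derivative shifts $i,j$) and the routine extension of the classical $Sp(1)$ first fundamental theorem to the super setting forced by the odd fields $b,c$; I do not expect either to be a real obstacle.
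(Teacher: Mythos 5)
Your proposal is correct and follows exactly the route the paper takes: combine Lemma~\ref{lem:gl3} (reducing global sections of $\text{gr}^3(\cQ)$ to $SU(2)$-invariants of the fibre $\tilde R$) with the first fundamental theorem of invariant theory for $SU(2)$, identifying the quadratic generators with the $A(\tilde x,\tilde y,i,j)$. The paper states this in three lines; your version merely spells out the bookkeeping (the symplectic/dual pairings and the super extension of the FFT) that the paper leaves implicit.
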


 If $F$ is a global section of $\text{gr}^2(\cQ)_{n,s}$,  in  the coordinates $(\tilde z_1,\tilde z_2)$,  $F $ can be written as a polynomial
 in $\tilde \beta, \partial\tilde\gamma,\tilde b, \tilde c$ and their $\partial$-derivatives
 with homogeneous degree in $\partial^l b$, $l\geq 0$ being $n-s$
  and homogeneous degree in $\partial^l \beta$, $l\geq 0$ being $s$.
 As a polynomial in $\partial^{l+1}\gamma,\partial^l c $, $l\geq 0$,
  $$F=f_a+f_{a+1}+F_{a+2},$$ where every monomial of $F_{a+2}$ has degree at least $a+2$,
    $f_a\neq 0$ is homogeneous of degree $a$ and
 $f_{a+1}$ is homogeneous of degree $a+1$.  So as polynomials in  
 $\partial^l\tilde \beta, \partial^{l+1}\tilde\gamma,\partial^l\tilde b,\partial^l \tilde c,\, l\geq 0$,
 $f_a$ is homogeneous of degree $n+a$, $f_{a+1}$ is homogeneous of degree $n+a+1$, every
 monomial of $F_{a+2}$ has degree at least $n+a+2$.

Through the morphism of sheaves
$$H_a: \text{gr}^2(\cQ)_{n,s}^a\to \text{gr}^2(\cQ)_{n,s}^a/\text{gr}^2(\cQ)_{n,s}^{a+1}\subset \text{gr}^3(\cQ).$$
   $H_a(F)$ is a global section of $\text{gr}^3(\cQ)$, and we have
   $$H_a(F)=f_a(H_0(\tilde\beta),H_1(\tilde\partial \gamma),H_0(\tilde b),H_1(\tilde c)).$$
   Thus by Lemma \ref{lem:gg}, $f_a$ is generated by $A(\tilde x,\tilde y,i,j)$.

   From the coordinate transformation formula (\ref{kum.coo}), we have
   $$\partial^k\tilde x=\sum a_n t^{2i+1},  \text{ for }x=\beta, \partial\gamma, b ,c,\, k\geq 0.$$
   Here $a_n$ is a polynomial of $\partial^l\beta,\partial^{l}\gamma,\partial^l b,\partial^l c,\,l\geq 0$.
    So the  coefficient of $t^{2k}$ will be zero.
    Since $f_a\neq 0$, $f_a$ is generated by the quadratics $A(x,y,i,j)$, $a+n$ must be even, i.e. $n+a+1$ must be odd. Then
    $$f_{a+1}(\tilde \beta,\tilde \gamma,\tilde b, \tilde c)=\sum a_n(\beta,\gamma,b,c)t^{2n+1}.$$

 The coordinate transformation formula for $A(b,c, i,j)$ is
 $$A(\tilde b,\tilde c,i,j)=A(\tilde b,\tilde c,i,j)+\frac {t^2} 2 \big(-\partial^ib^1\sum_{k=0}^{j-1}\partial ^{j-k-1}(\partial \gamma^1\partial^kc^1)+\partial^ib^2\sum_{k=0}^{j-1}\partial^{j-k-1}(\partial \gamma^1\partial^kc^2)\big)$$
 $$+\frac {t^2} 2 \big(\sum_{k=0}^{i-1}\partial^{i-k-1}(\partial \gamma^1\partial^kb^1)\partial^jc^1-
 \sum_{k=0}^{i-1}\partial^{i-k-1}(\partial \gamma^1\partial^kb^2)\partial^jc^2\big)$$
 $$+\frac {t^2} 2 \big(-\sum_{k=0}^{i-1}\partial^{i-k-1}(\partial \gamma^2\partial^kb^2)\partial^jc^1+
 \partial^ib^2\sum_{k=0}^{i-1}\partial^{i-k-1}(\partial \gamma^2\partial^kc^1)\big)+O(t^4)$$
 $$=A(b,c,i,j)+\frac {t^2} 2\big( \sum_{k\geq 1}Ht^k(A(b,c,i,j)) \frac{\partial^k \gamma^1}{k!}+
 \sum_{k\geq 1}Gt^k(A(b,c,i,j)) \frac{\partial^k \gamma^2}{k!}\big)
 +O(t^4).$$
 In fact one can check that for $x,y=\beta, \partial\gamma, b, c,$
 $$A(\tilde x,\tilde y,i,j)=\alpha \bigg(A(x,y,i,j)+
\frac {t^2} 2\big( \sum_{k\geq 1}Ht^k(A(x,y,i,j)) \frac{\partial^k \gamma^1}{k!}+ \sum_{k\geq 1}Gt^k(A(x,y,i,j)) \frac{\partial^k \gamma^2}{k!}\big)\bigg)+O(t^4).$$
  Here $\alpha$  is a constant.
  \begin{enumerate}
\item $\alpha=0, \quad (x,y)=(\beta,c),(\beta,\partial \gamma),(b,c),(b,\partial \gamma);$
\item $\alpha=2,\quad (x,y)=(\beta,\beta),(b,b),(\beta,b);$
\item $\alpha=\frac 1 2,\quad (x,y)=(\partial \gamma,\partial\gamma),(\partial\gamma,c),(c,c).$
  \end{enumerate}

 Thus
 $f_a( \tilde\beta, \partial \tilde\gamma,\tilde b,\tilde c)$
 $$
 ={ 2}^{\frac{n-a}2}\bigg(f_a(\beta, \partial\gamma, b, c)
  +\frac {t^2} 2\big( \sum_{k\geq 1}Ht^kf_a(\beta, \partial\gamma, b, c) \frac{\partial^k \gamma^1}{k!}+
  \sum_{k\geq 1}Gt^kf_a(\beta, \partial\gamma, b, c) \frac{\partial^k \gamma^2}{k!}\big) \bigg)
  +O(t^4).$$

  From the coordinate transformation formula (\ref{kum.coo}), we know that the degree of $\partial^lc,\partial^{l+1}\gamma$,
  $l\geq 0$ will not decrease after the coordinate transformation. Since every monomial of $F_{a+2}$ has
  degree of $\partial^l\tilde c,\partial^{l+1}\tilde \gamma$ at least $a+2$, after changing coordinates, the degree of $\partial^l\tilde c,\partial^{l+1}\tilde \gamma$ will still be at least $a+2$.
Then
$$
F(\tilde \beta,\partial \tilde \gamma,\tilde b,\tilde c)=f_a(\tilde \beta,\partial \tilde \gamma,\tilde b,\tilde c)+f_{a+1}+F_{a+2}$$
  $$= \alpha^{\frac{n-i}2}\bigg(f_i(\beta, \partial\gamma, b, c)
      +\frac {t^2} 2\big( \sum_{k\geq 1}Ht^kf_a(\beta, \partial\gamma, b, c) \frac{\partial^k \gamma^1}{k!}+
      \sum_{k\geq 1}Gt^kf_a(\beta, \partial\gamma, b, c) \frac{\partial^k \gamma^2}{k!}\big) \bigg)$$
 $$      +\text{ terms } t^2 \text{ with degree of }\partial^{l+1}\gamma,\partial^lc{\geq a+2} +\text{ other } t \text{ terms }.$$
 $t^2$ is singular at the chart $(U_1, z^1,z^2)$. So if $F$ is a global section on $X$, we have

$$\sum_{k\geq 1}Ht^kf_a(\beta, \partial\gamma, b, c) \frac{\partial^k
\gamma^1}{k!}+
\sum_{k\geq 1}Gt^kf_a(\beta, \partial\gamma, b, c) \frac{\partial^k \gamma^2}{k!}=0 . $$

By Lemma \ref{lem:in}, we conclude that $f_a$ is $\mathfrak{sl}_2[t]$ invariant. So it can be generated by $A_{\beta\partial\gamma}$, $A_{bc}$, $A_{\beta c}$, $A_{\partial\gamma b}$, $A_{cc}$, $A_{bb}$, $A_{\beta b}$, $A_{\partial\gamma c}$ and their $l$th derivatives, for $l\geq 0$. Then $f_a$ is a global section of $\text{gr}^2({\cQ})$. Since $F$ is a global section, then $F-f_a$ is a global section. By induction on $a$, we see that $F$ is $\mathfrak{sl}_2[t]$ invariant. This completes the proof of Theorem \ref{thm.glo} and  Corollary \ref{thm.inv}.

\subsection{Global sections of chiral de Rham complex on Kummer surface}
Consider the sections in $\cQ(U_1)$:
\begin{align}\label{sec:gl}
 &Q(z)= :\beta^1(z)c^1(z):+:\beta^2(z)c^2(z):,&\nonumber\\
 &L(z)=\sum_{i=1}^2(:\beta^i(z)\partial\gamma^i(z):-:b^i(z)\partial c^i(z):),&\nonumber\\
&J(z)=-:b^1(z)c^1(z):-:b^2(z)c^2(z):,&\nonumber\\
&G(z)=:b^1(z)\partial\gamma^1(z):+:b^2(z)\partial\gamma^2(z):,&\\
&B(z)= :\beta^1(z)b^2(z):-:\beta^2(z)b^1(z):,&\nonumber\\
 &D(z)= :b^1(z)b^2(z):,& \nonumber \\
&C(z)=:\partial \gamma^1(z)c^2(z):-:\partial\gamma^2(z)c^1(z):,&\nonumber\\
 &E(z)=:c^1(z)c^2(z):.&\nonumber
\end{align}
 These  sections can be extended to global sections of $\cQ$ on $X$. Note that $L(z), J(z), Q(z), G(z)$ are global sections on any Calabi-Yau manifold, and $E(z)$ and $D(z)$ correspond to the nowhere vanishing holomorphic $2$-form on $X$ and its dual respectively. The last two sections come from $$C(z)=G\circ_0 E(z), \quad B=Q\circ_0 D(z).$$

Now we can prove
\begin{thm}\label{thm.main}The eight global sections given by (\ref{sec:gl}) strongly generate $\cQ(X)$.
\end{thm}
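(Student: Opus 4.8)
The plan is to deduce the statement from Theorem \ref{thm.glo} by applying the two reconstruction lemmas, Lemma \ref{lem.q1} and then Lemma \ref{lem.q}. Write $a_1,\dots,a_8$ for the eight sections $Q,L,J,G,B,D,C,E$ of (\ref{sec:gl}). First I would record that each $a_i$ is genuinely a global section of $\cQ$ on $X$: the fields $L,J,Q,G$ are global on any Calabi--Yau manifold, $E$ and $D$ come from the nowhere vanishing holomorphic $2$-form on the $K3$ surface $X$ and from its dual bivector, and $C=G\circ_0E$, $B=Q\circ_0D$ are circle products of global sections, hence global. Reading off (\ref{sec:gl}), each $a_i$ lies in a definite step of the filtration $\{\cQ_n\}$ and has a definite $\deg_\beta$: one has $Q,L,J,G\in\cQ_1(X)$ with $\deg_\beta$ equal to $1,1,0,0$ respectively; $B,D\in\cQ_2(X)$ with $\deg_\beta$ equal to $1,0$; and $C,E\in\cQ_0(X)$.

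Next I would compute the symbol of each $a_i$. Passing from $\cQ$ to $\text{gr}(\cQ)$ replaces the Wick product by the ordinary product, and then $\psi$ (using $\text{gr}(\cQ)\cong\text{gr}^2(\cQ)$) retains only the top $\deg_\beta$ part; the nontrivial OPEs among $\beta,\gamma,b,c$ contribute only to lower filtration and so cannot affect these symbols. Concretely, in $\text{gr}^2(\cQ)(X)$,
\begin{align*}
&\psi_{1,1}(\phi_1(Q))=A_{\beta c}, & &\psi_{1,1}(\phi_1(L))=A_{\beta\partial\gamma},\\
&\psi_{1,0}(\phi_1(J))=-A_{bc}, & &\psi_{1,0}(\phi_1(G))=A_{\partial\gamma b},\\
&\psi_{2,1}(\phi_2(B))=A_{\beta b}, & &\psi_{2,0}(\phi_2(D))=A_{bb},\\
&\psi_{0,0}(\phi_0(C))=-A_{\partial\gamma c}, & &\psi_{0,0}(\phi_0(E))=A_{cc};
\end{align*}
here the terms $-b^i\partial c^i$ in $L$ are annihilated by $\psi_{1,1}$ because they have $\deg_\beta=0$, and $J$ itself already has $\deg_\beta=0$, $\deg_b=1$, so $\phi_1(J)=-A_{bc}$ on the nose. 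Thus the eight symbols are, up to sign, precisely the eight generators $A_{\beta\partial\gamma},A_{bc},A_{\beta c},A_{\partial\gamma b},A_{cc},A_{bb},A_{\beta b},A_{\partial\gamma c}$ appearing in Theorem \ref{thm.glo}.

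Finally I would chain the two lemmas. By Theorem \ref{thm.glo}, those eight elements generate $\text{gr}^2(\cQ)(X)$ as a $\partial$-ring; since they equal, up to sign, the $\psi$-images of $\phi_{n_1}(a_1),\dots,\phi_{n_8}(a_8)$, Lemma \ref{lem.q1} applies and shows that $\phi_{n_1}(a_1),\dots,\phi_{n_8}(a_8)$ generate $\text{gr}(\cQ)(X)$ as a $\partial$-ring. Then Lemma \ref{lem.q}, applied to $a_1,\dots,a_8\in\cQ(X)$, yields that these eight sections strongly generate the vertex algebra $\cQ(X)$, which is the assertion of the theorem.

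Since the substantive input---the invariant theory computation---is already contained in Theorem \ref{thm.glo} and Corollary \ref{thm.inv}, there is essentially no obstacle at this stage; the only point needing care is the symbol computation, and this is made automatic by the compatibility $\phi_{n+m}(:\!ab\!:)=\phi_n(a)\phi_m(b)$ together with its $\text{gr}^2$ analogue (the identities used already in the proofs of Lemma \ref{lem.q} and Lemma \ref{lem.q1}), which guarantee that no lower-order corrections can interfere with identifying the symbols with the $A_{xy}$.
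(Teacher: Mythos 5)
Your proposal is correct and is essentially identical to the paper's own proof: you compute the symbols of the eight sections in $\text{gr}^2(\cQ)(X)$, identify them (up to sign, which is immaterial for generation) with the generators of Theorem \ref{thm.glo}, and then chain Lemma \ref{lem.q1} and Lemma \ref{lem.q} to conclude strong generation of $\cQ(X)$. The extra care you take in justifying the symbol computation and the global definedness of the eight sections only makes explicit what the paper leaves implicit.
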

\begin{proof}Locally, the images of the eight  global sections given by (\ref{sec:gl}) in $\text{gr}^2(Q)(X)$ are
\begin{align*}
&\psi_{2,0}(\phi_2(D(z)))=A_{bb}, &\psi_{2,1}(\phi_2(B(z)))=A_{\beta b},\\
&\psi_{1,1}(\phi_1(L(z)))=A_{\beta\partial\gamma},&\psi_{1,1}(\phi_1(Q(z)))=A_{\beta c},\\
&\psi_{1,0}(\phi_1(J(z)))=-A_{bc}, &\psi_{1,0}(\phi_1(G(z)))=A_{b\partial\gamma},\\
&\psi_{0,0}(\phi_0(C(z)))=A_{c\partial\gamma},&\psi_{0,0}(\phi_0(E(z)))=A_{cc}.
\end{align*}
By Theorem \ref{thm.glo}, $\psi_{2,0}(\phi_2(D(z)))$, $\psi_{2,1}(\phi_2(B(z)))$, $\psi_{1,1}(\phi_1(L(z)))$,  
$\psi_{1,1}(\phi_1(Q(z)))$, $\psi_{1,0}(\phi_1(J(z)))$, $\psi_{1,0}(\phi_1(G(z)))$,
  $\psi_{0,0}(\phi_0(C(z)))$, $\psi_{0,0}(\phi_0(E(z)))$,  generate $\text{gr}^2(\cQ)(X)$ as a $\partial$-ring. So by Lemma \ref{lem.q1},
  $\phi_2(D(z))$, $\phi_2(B(z))$, $\phi_1(L(z))$, $\phi_1(Q(z))$, $\phi_1(J(z))$, $\phi_1(G(z))$, 
  $\phi_0(C(z))$, $\phi_0(E(z))$ generate $\text{gr}(\cQ)(X)$ as a $\partial$-ring.
  Then by Lemma \ref{lem.q}, $D(z)$, $B(z)$, $L(z)$, $Q(z)$, $ J(z)$, $G(z)$, $C(z)$, $E(z)$ strongly generate $\cQ(X)$.
\end{proof}

\begin{cor} For a Kummer surface $X$, $\cQ(X)$ is isomorphic to the $N=4$ superconformal algebra with central charge $c=6$.
\end{cor}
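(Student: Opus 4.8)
The plan is to obtain the corollary by combining Theorem \ref{thm.main} with the presence of the $N=4$ structure on a hyperk\"ahler target. Since a Kummer surface $X$ is a $K3$ surface it is hyperk\"ahler of complex dimension $2$, so by \cite{BHS,H} the vertex algebra $\cQ(X)$ carries a (small) $N=4$ superconformal structure with central charge $c=3\cdot 2=6$, and this structure is strongly generated by eight fields: the conformal vector, three currents generating a copy of $\widehat{\mathfrak{sl}_2}$ at level $1$, and four odd supercurrents. The first step is to match these eight generators with the eight global sections exhibited in \eqref{sec:gl}: the Virasoro field of central charge $6$ is a suitable correction of $L(z)$ by $\partial J(z)$; the affine $\mathfrak{sl}_2$ is generated by the fermion-number current $J(z)$ together with $E(z)$ and $D(z)$, which, as noted just after \eqref{sec:gl}, represent the nowhere vanishing holomorphic symplectic form on $X$ and its dual; and the four supercurrents are $Q(z)$, $G(z)$, $B(z)=Q\circ_0 D(z)$, and $C(z)=G\circ_0 E(z)$. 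Verifying this identification amounts to a finite check that the operator product expansions among the eight sections close into the $N=4$ relations, which is essentially the content of \cite{BHS,H}; it yields an embedding of the $N=4$ superconformal vertex algebra with $c=6$ into $\cQ(X)$.

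It then remains only to see that this embedding is onto, and this is precisely Theorem \ref{thm.main}: the eight global sections in \eqref{sec:gl} strongly generate $\cQ(X)$ as a vertex algebra. Hence the $N=4$ subalgebra equals all of $\cQ(X)$, and therefore $\cQ(X)$ is isomorphic to the $N=4$ superconformal vertex algebra with central charge $6$.

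I expect no genuine obstacle at the level of the corollary; the main obstacle is entirely upstream, in Theorem \ref{thm.main}. That theorem runs through the identification $\text{gr}^2(\cQ)(X)\cong R^{\mathfrak{sl}_2[t]}$ of Corollary \ref{thm.inv} -- built from Kobayashi's theorem on parallel tensors on $K3$ \cite{Ko}, the $\partial$-ring statement Theorem \ref{thm.invt}, and the invariance criterion Lemma \ref{lem:in} -- then through Lemma \ref{lem.q1} to a $\partial$-ring generating set of $\text{gr}(\cQ)(X)$, and finally through the reconstruction Lemma \ref{lem.q}, which lifts that generating set to a strong generating set of $\cQ(X)$; the fact that exactly eight generators occur on each side is what lets the reconstruction lemma be applied without loss. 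Should one prefer to establish injectivity of the surjection from the abstractly presented $N=4$ algebra directly, rather than reading it off as above, one can instead compare the standard-monomial basis of $R^{\mathfrak{sl}_2[t]}$ from Theorem \ref{thm.standard} with a Poincar\'e--Birkhoff--Witt-type spanning set for the $N=4$ algebra at $c=6$: the two agree because the defining quadratic relations \eqref{eq.s1}--\eqref{eq.s3} are exactly the classical (associated-graded) shadows of the $N=4$ operator product relations. Either way, the corollary is immediate once Theorem \ref{thm.main} is available.
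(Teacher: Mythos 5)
Your argument is correct and follows essentially the same route as the paper: the paper likewise deduces the corollary from Theorem \ref{thm.main} together with a direct check that the eight sections of \eqref{sec:gl} satisfy the $N=4$ OPE relations at $c=6$ (citing page 187 of \cite{Kac} rather than \cite{BHS,H}), with the Virasoro element taken to be $L(z)-\tfrac12\partial J(z)$. Your closing observation about establishing injectivity via the standard-monomial basis of Theorem \ref{thm.standard} addresses a point the paper's two-line proof leaves implicit (the paper only asserts simplicity in a subsequent remark, deferring it to another article), but it does not alter the overall approach.
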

\begin{proof} The generators and OPE relations of this algebra can be found on page 187 of \cite{Kac}, and checking this is a straightforward calculation. Note that 
the Virasoro element $L(z)$ above must be replaced with $L(z) -1/2 \partial J(z)$, which has central charge $6$. 
\end{proof}

\begin{remark} In fact, $\cQ(X)$ is isomorphic to the {\it simple} $N=4$ vertex algebra with $c=6$. This follows from a more general result which will appear in a separate paper.
\end{remark}

Finally, we use the basis of $R^{\mathfrak{sl}_2[t]}$ constructed in Section~\ref{sec.mon} to construct a basis of $\cQ(X)$.
For convenience,  let $\alpha$ be the 'inverse' of $\psi\phi$ on the set $S$:
\begin{align}
&\alpha(\partial^k A_{bb})=\partial^kD(z),&&\alpha(\partial^k A_{\beta b})=\partial^kB(z),&&\alpha(\partial^k A_{\beta\gamma})=\partial^kL(z),&\nonumber\\
&\alpha(\partial^k A_{\beta c})=\partial^kQ(z),&&\alpha(\partial^k A_{bb})=\partial^kJ(z),&&\alpha(\partial^k A_{b\partial\gamma})=\partial^kG(z),&\nonumber\\
&\alpha(\partial^k A_{c\partial\gamma})=\partial^kC(z),&&\alpha(\partial^k A_{cc})=\partial^kE(z).&\nonumber
\end{align} 
For a standard monomial $s=s_1s_2\cdots s_k$ of $S$, let 
$$\alpha(s)=:\alpha(s_1)\alpha(s_2)\cdots\alpha (s_k):.$$ 
Then $\psi_*(\phi_{*,*}(\alpha(s)))=s$, with appropriate subscripts of $\psi$ and $\phi$.  It is easy to check that
the set 
$$\{\alpha(s)|s \text{ is a standard monomial of }S\}$$
is a linear basis of $\cQ(X)$.


\begin{thebibliography}{ABKS}
\bibitem{BHS}  D. ~Ben-Zvi, R.~ Heluani,  M. ~Szczesny, \textit{Supersymmetry of the chiral de Rham complex}, Compos. Math. 144 (2008) 503-521.
\bibitem{B} R. Borcherds, \textit{Vertex operator algebras, Kac-Moody algebras and the monster}, Proc. Nat. Acad. Sci. USA 83 (1986) 3068-3071.
\bibitem{Bor} L. Borisov, \textit{Vertex algebras and mirror symmetry}, Commun. Math. Phys. 215 (2001) 517-557.
 \bibitem{FLM} I.B. Frenkel, J. Lepowsky, and A. Meurman, \textit{Vertex Operator Algebras and the Monster}, Academic Press, New York, 1988.
 \bibitem{H} R. Heluani, \textit{Supersymmetry of the chiral de Rham complex II: commuting sectors}, Int. Math. Res. Not. 2009 (2009) no. 6, 953-987.
 \bibitem{LiI} H. Li, \textit{Local systems of vertex operators, vertex superalgebras and modules}, J. Pure Appl. Algebra 109 (1996), no. 2, 143-195.
 \bibitem{LiII} H.~Li, \textit{Vertex algebras and vertex Poisson algebras}, Commun. Contemp. Math. 6 (2004) 61-110.
 \bibitem{LL} B.~Lian, A.~Linshaw, \textit{Howe pairs in the theory of vertex algebras}, J. Algebra 317,111-152 (2007).
 \bibitem{Kac} V. Kac, \textit{Vertex Algebras for Beginners}, University Lecture Series, Vol. 10. American Math. Soc., 1998.
 \bibitem{Ko}S.~Kobayshi, \textit{First Chern class and holomorphic tensor fields}, Nagoya Math. J. 77(1980), 5-11.

 \bibitem{Kap} A.~Kapustin, \textit{Chiral de Rham complex and the half-twisted sigma-model}. arXiv:hep-th/0504074.
 \bibitem{LZ} B. Lian and G.J. Zuckerman, \textit{New perspectives on the BRST-algebraic structure of string theory}, Comm. Math Phys. 154 (1993) 613-646.
  \bibitem{LSSI}A.~Linshaw, G.~Schwarz, B.~Song, \textit{Jet schemes and invariant theory}, arXiv:1112.6230.
 \bibitem{LSSII}A.~Linshaw, G.~Schwarz, B.~Song, \textit{Arc spaces and the vertex algebra commutant problem}, arXiv:1201.0161.
 \bibitem{MSV}F.~Malikov, V.~Schectman, and A.~Vaintrob, \textit{Chiral de Rham complex}, Comm. Math. Phys.  204, (1999) 439-473.
 \bibitem{MS}F.~Malikov, V.~Schectman, \textit{Chiral de Rham complex. II},  Differential topology, infinite-dimensional Lie algebras, and applications, 149-188, Amer. Math. Soc. Transl. Ser. 2, 194, Amer. Math. Soc., Providence, RI, 1999.
 \bibitem{We} H. Weyl, \textit{The Classical Groups: Their Invariants and Representations}, Princeton University Press, 1946.



\end{thebibliography}
\end{document}